\documentclass[preprint,review,12pt]{elsarticle}
 



\usepackage{amssymb}





\usepackage[]{times}
\usepackage{epsfig}
\usepackage{subfigure}
\usepackage{amsmath,amsthm}
\usepackage{bm,amssymb}
\usepackage{graphicx}
\usepackage{multirow}
\usepackage{flushend}
\usepackage{url}
\usepackage{color}
\newenvironment{myindentpar}[1]%
 {\begin{list}{}%
         {\setlength{\leftmargin}{#1}}%
         \item[]%
 }
 {\end{list}}
\newtheorem{theorem}{Theorem}[section]

\journal{Energy Conversion and Management}

\begin{document}

\begin{frontmatter}


\author{Nikolai~A.~Sinitsyn}
 \ead{nsinitsyn@lanl.gov}
 \address{Los Alamos National Laboratory, Los Alamos, NM 87545, USA}
 \author{Soumya Kundu\corref{cor1}}
 \ead{soumyak@umich.edu}
 \cortext[cor1]{Corresponding author}
\address{University of Michigan, Ann Arbor, MI 48105, USA}
\author{Scott Backhaus}
 \ead{backhaus@lanl.gov}
\address{Los Alamos National Laboratory, Los Alamos, NM 87545, USA}

\title{Safe Protocols for Generating Power Pulses with Heterogeneous Populations of Thermostatically Controlled Loads}


\author{}

\address{}

\begin{abstract}
We explore methods to use thermostatically controlled loads (TCLs), such as water heaters and air conditioners, to provide ancillary services by assisting in balancing generation and load. We show that by adding simple imbedded instructions and a small amount of memory to temperature controllers of TCLs,
it is possible to design open-loop control algorithms capable of creating short-term pulses of demand response without unwanted power oscillations associated with temporary synchronization of the TCL dynamics.  By moving a small amount of intelligence to each of the end point TCL devices, we are able to leverage our knowledge of the time dynamics of TCLs to shape the demand response pulses for different power system applications.  A significant benefit of our open-loop method is the reduction from two-way to one-way broadcast communication which also eliminates many basic consumer privacy issues.  In this work, we focus on developing the algorithms to generate a set of fundamental pulse shapes that can subsequently be used to create demand response with arbitrary profiles.  Demand response control methods, such as the one developed here, open the door to fast, nonperturbative control of large aggregations of TCLs.
\end{abstract}

\begin{keyword}
Thermostatically-controlled-loads \sep demand side control \sep ancillary services \sep generation-load balancing \sep hysteresis-based control


\end{keyword}

\end{frontmatter}


\section{Introduction}
\label{sec:intro}
Conventional thermal generators, which are constrained by physical ramp rates, have difficulty in manoeuvering to compensate for variability in loads or other generation, e.g. variability due to time-intermittent generation such as wind 
generation. On the other hand, electrical loads, e.g. thermostatically-controlled-loads (TCLs), may be a better alternative to provide the required generation-balancing ancillary services 
\cite{strbac,klobasa,heffner,tcl1,tcl2,tcl3,tcl4,tcl5,tcl6,tcl7,tcl8,tcl9,tcl10,zehir12,bel09} because it is feasible for these loads to compensate for power imbalances much more quickly than thermal generators.  TCLs such as air conditioners (AC) and water 
heaters account for about 50\% of electricity consumption in the United States \cite{eia_recs01} implying that the potential resource provided by TCLs is quite large. A residential AC unit consumes 0.5-7 kW, and when aggregated over a 
large city with a million houses, the total controllable resource approaches $\sim$3 GW. The typical TCL duty cycle will limit the amount of resource available at any one time, but even a relatively low duty cycle of 10\% still yields a 
controllable resource of $\sim$300MW.  Because these TCL loads are on/off loads and not subject to the ramping limits of conventional thermal generation, they can be fully deployed on the time scale of the latency of the communication 
system used to access them, e.g. $\sim$0.3 minutes \cite{heffner,kirby08,kirby07,kirby,sastry10}. Such short time scales for full deployment are not achievable with spinning or non-spinning reserves and, therefore, TCLs have 
considerable potential to provide both system reserves and generation-load balancing ancillary services \cite{callaway,hiskens}.


Past research on controlling TCLs has been either on direct interruption for long (several hours) duration \cite{kirby07,chong,mortensen,malhame} or on shifting of the TCLs temperature set point \cite{callaway,pscc,perfumo,bashash}. The infrequent application of the former technique is useful for critical peak shaving, but frequent application would quickly lead to customer fatigue.  An approach based on small shifts of the TCL set point appears more attractive. In this case, subtle (0.1-0.5$^o$C) changes in the thermostat set point temperature can create significant changes in aggregate power consumption. Such small set point shifts would be almost unnoticed by customers and could be applied frequently as long there is not a long-term drift in one direction.  However, this method is not without its difficulties. Extensive analytical and numerical modeling studies have shown that suddenly imposed changes to the operation cycles of a large population of TCL loads leads to temporary synchronization of otherwise uncorrelated autonomous devices resulting in potentially detrimental power oscillations \cite{callaway,pscc,perfumo,bashash,koch}. Efforts to resolve this problem by feedback control techniques encounter additional problems, e.g. the need for two-way communication to provide full state information for each TCL  and subsequent individual control of all TCLs. This approach, at least, requires development of new software that would process large amount of data on-fly while the state estimation must be done in a noisy environment created by all the other loads on the system \cite{mathieu12}.  Perhaps more important is that two-way communication  leads to privacy security issues that have already raised a popular protest \cite{protest1,protest2,protest3,protest4}. It seems that it can be implemented only on the voluntary basis, possibly with only a small fraction of TCL owners willing to lower their security level in return for rebate. For example, if only 5\% from 1 million TCL owners in the area allow for the two-way communication with their utility company, this would lead to insignificant available power for control. However, the information collected from this  5\% of the population should be sufficient to estimate some important aggregated parameters of the full population, such as the fraction of the currently working TCLs, the total power consumed by these TCLs, distribution of the temperature band widths of their operation, fraction of TCLs in the ON and OFF modes e.t.c. Extraction of this information, arriving to the utility company from only 5\% of all TCLs, does not require a complex software and can be achieved "on-fly".  Therefore, we believe that the technology that would allow for reliable one-way communication control of the remaining "95\% of TCLs", and that would be based on the knowledge of only basic aggregate parameters of the full population of TCLs, should be attractive for large scale commercial application.


In this article, we propose an  open loop approach which eliminates the problem of unwanted synchronization of TCLs without a need for two-way communication opening a path for implementing control functions with a heterogeneous population of TCLs using existing technologies. We show that by adding a small amount of intelligence (in the form of a few new elementary instructions) to the endpoint TCLs, we are able to implement simple open-loop controls which do not result in subsequent TCL load synchronization.  Such strategies to control large heterogeneous TCL populations, which we will call the safe protocols (SPs), could be quickly implemented to compensate for a generation-load mismatch without the fear of potentially dangerous subsequent TCL behavior.  In \cite{acc12}, one such a SP  for shifting temperature set point of the population of loads, without initiating any unwanted power oscillations, was proposed. The mechanism was shown to provide effective control on a relatively slow time scale (e.g. $\sim$30 minutes).  Here we extend the idea and show that embedded instructions allow to ``safely'' generate a class of power pulses of several basic shapes.  For example, we will show that it is possible to generate a sharp (e.g. 3 minutes long) power pulse with subsequent slow return of all TCLs to the initial customer set values of the temperature band. At the other extreme, we demonstrate another safe protocol that generates time-extended pulses ($\sim$0.5-1 hour) with changes in the TCL controlled temperatures of below 1$^o$C.  Such pulses are useful in spinning reserve applications.  In this work, we focus on the creation of these fundamental shapes and we postpone   the discussion of optimal pulse reconstruction and state estimation for future studies.

Our article is arranged as follows: In Section~\ref{sec:prob}, we will outline the major problems that appear with straightforward (not safe) strategies to control the power of TCLs; in Section~\ref{sec:SP1}, a demonstration of a SP is presented. In Section~\ref{sec:SP2}, we extend the idea and present a second protocol that achieves a shift of temperature set points of TCLs; Section~\ref{sec:SP3} presents a SP that creates a sharp power pulse of short duration; while Section~\ref{sec:SP4} briefly reviews possible further strategies to combine different SPs to achieve a desired power response. We summarize our results in Section~\ref{sec:concl}.

\section{Problem Description}\label{sec:prob}
Before we proceed with introduction to SPs, we outline the basic problems encountered when more straightforward ``unsafe'' approaches are applied to control the power consumption of heterogeneous TCL populations.

\begin{figure*}[htp]\label{}
\centering
\subfigure[{Dynamics of temperature ({\it top}) and consumed power ({\it bottom}) of a thermostatic load}]{\includegraphics[width=2.5in]{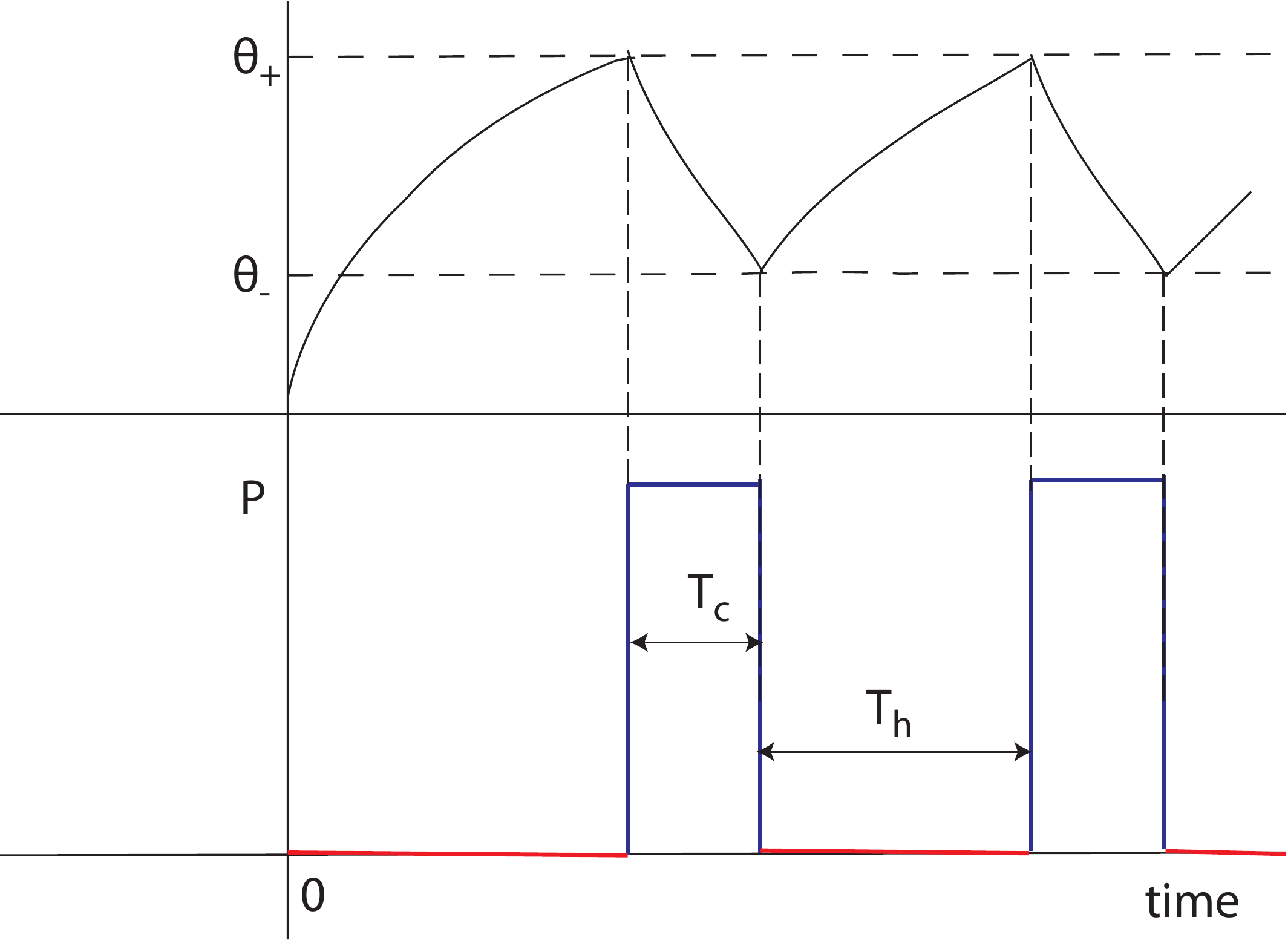}\label{fig:dynamics}}\hspace{0.01in}
\subfigure[{TCL probability  distribution  function (PDF)}]{\includegraphics[width=2.65in]{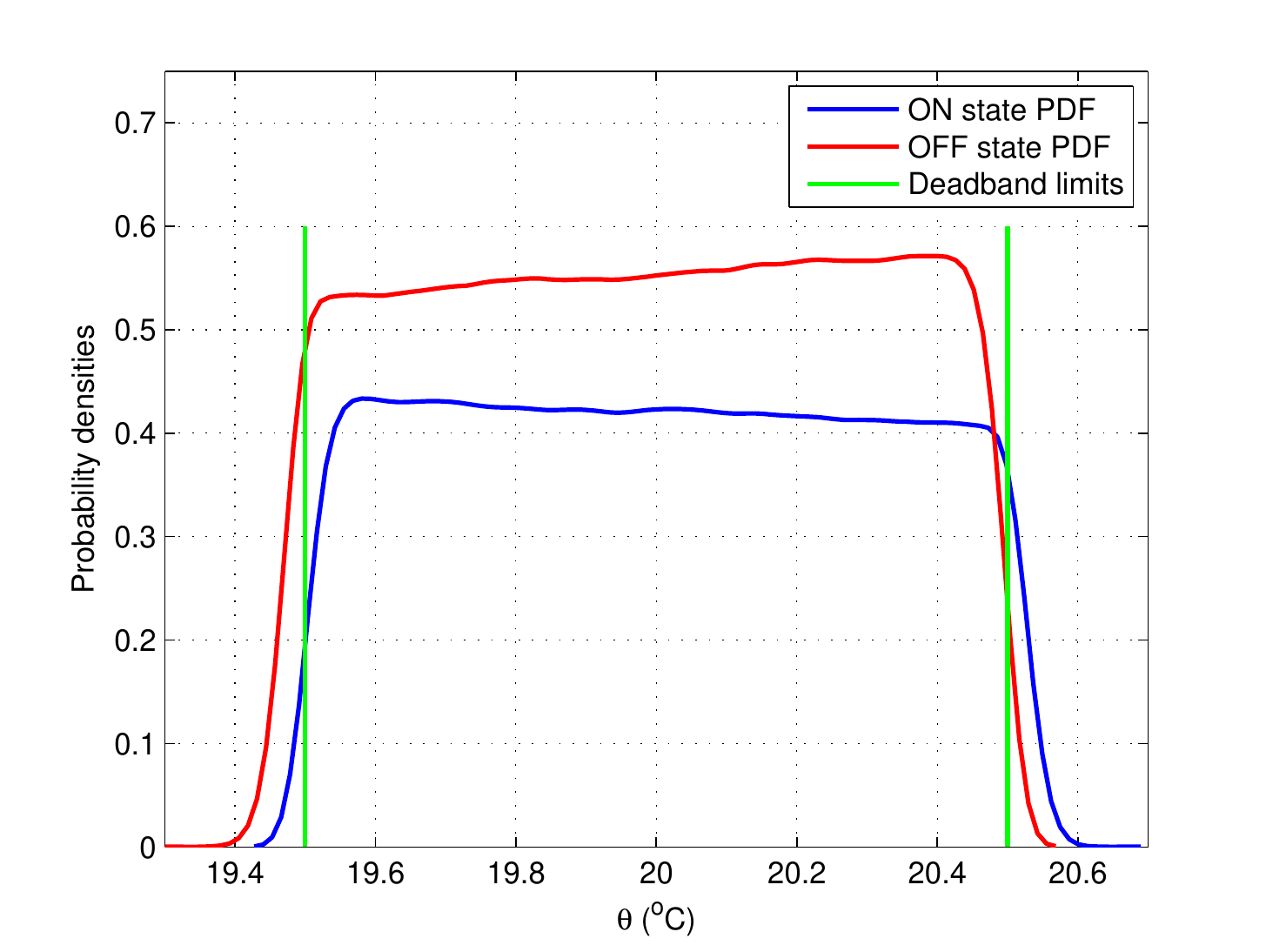}\label{fig:f1f0}}
\caption{{(left) Individual dynamics   and (right) aggregate steady state probability distribution  of TCLs over temperature in the OFF (red) and ON (blue) modes. Small nonzero density of TCLs beyond the deadband limits in (b) is due to the combined effect of temperature fluctuations and a finite response time to temperature measurements.}} 
\end{figure*}

When not subject to major thermal disturbances, a TCL's power consumption is cyclic in nature.  If we consider a TCL providing cooling, the controlled temperature $\theta$ drops when the TCL draws power (in the ON state) and increases when the TCL is not drawing power (in the OFF state). Evolution of temperature in a single house is generally modeled by a differential equation \cite{pscc}:
\begin{equation}
\dot{\theta} = \left\lbrace \begin{array}{ll} -\frac{1}{CR}\left(
  \theta - \theta_{amb} +PR \right), & \text{ON state} \\ &
  \\ -\frac{1}{CR}\left( \theta - \theta_{amb} \right), &
  \text{OFF state}	\end{array} \right. \label{eq:micro}
\end{equation}
\begin{figure}[htp]\label{fig:legends}
\centering
\includegraphics[width=2.65in]{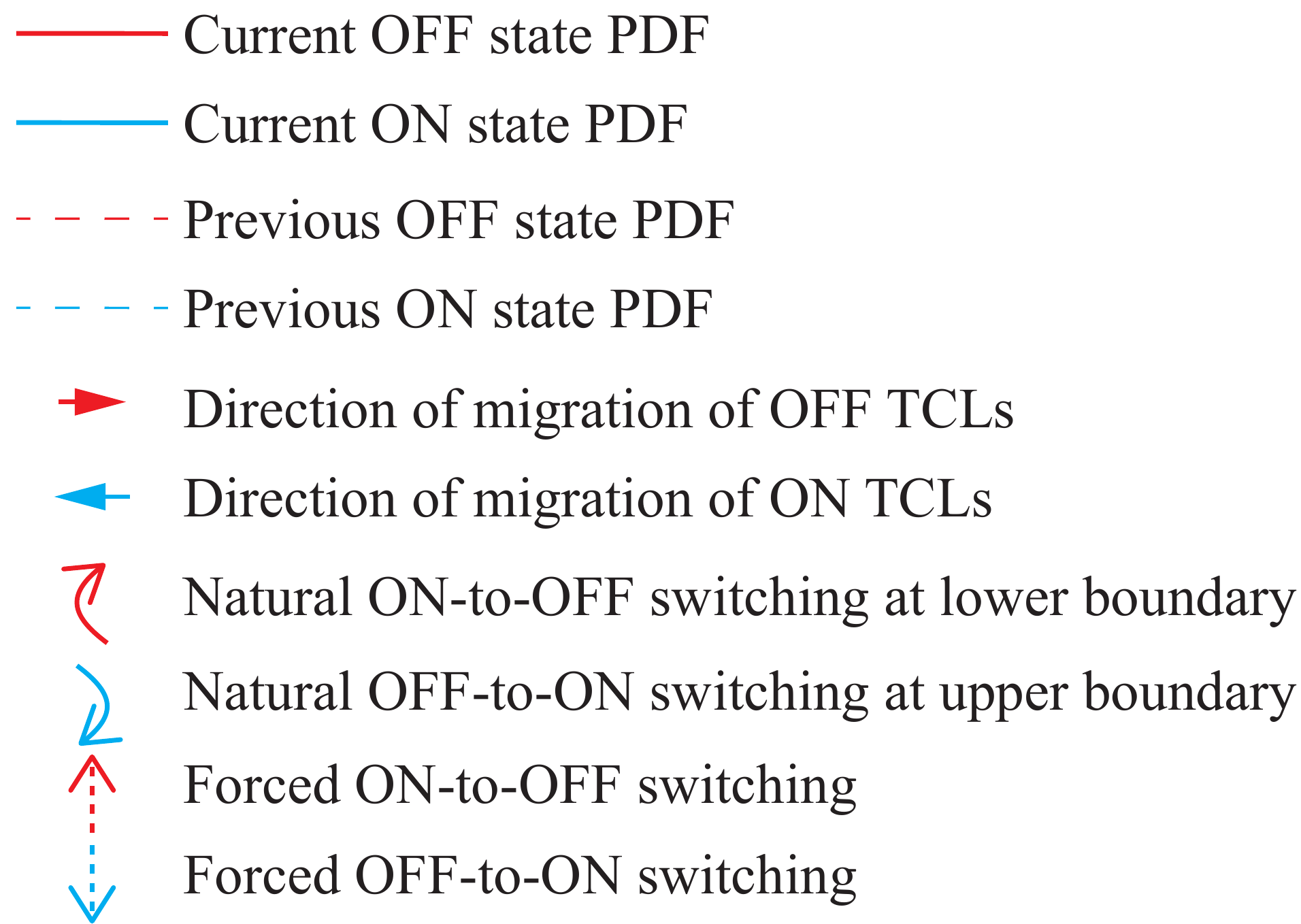}\label{fig:legends}
\caption{{A list of different signs used in the following figures.}} 
\end{figure}
where $\theta_{amb}$ is the ambient temperature, $C$ is the thermal capacitance, $R$ is the thermal resistance, and $P$ is the cooling power of the TCL when  in the ON state.  $P$  is proportional to the power drown from the Power Grid  with a coefficient that describes the efficiently of the TCL. We assume the regime of TCL operation with a duty cycle less than 100\%. Such a TCL switches its state from OFF to ON when its temperature increases to $\theta_+$ and from OFF to ON when temperature drops to $\theta_-$. Such dynamics and control push the TCL into a hysteresis around the set point temperature $\theta_s=\left(\theta_-+\theta_+\right)/2$ with a dead band of $\Delta=\left(\theta_+-\theta_-\right)$. Solution of Eqs. (\ref{eq:micro}) with such boundary conditions is shown in Fig.~\ref{fig:dynamics}~(black curve). Fig.~\ref{fig:dynamics}~(red and blue curves) shows that at steady state the power consumption of a TCL switches between a constant value and zero in ON and OFF states, respectively. The heterogeneity and independence of the TCL dynamics ensure that the time phase of  these oscillations are not correlated and do not result in noticeable aggregate power oscillations. If we start from a set of arbitrary TCL initial states, the heterogeneity of $C,R$ and $P$ values across the population and natural temperature fluctuations will cause the aggregate power demand of a large  ensemble of TCLs  settle down to a constant value with fluctuations due to the finite size of the population.  In this steady state, temperatures of all the TCLs would lie within their individual hysteresis dead bands and the temperatures of the TCLs attain steady state probability distribution \cite{callaway}{, as presented in Fig.~\ref{fig:f1f0}}. For simplicity, we will use this steady state as our reference state for demonstration of response of the population to control signals. { To help understand the figures to follow, a list of the different marks/signs used is presented in Fig.~\ref{fig:legends}.}

\begin{figure}[htp]
\centering
\includegraphics[height=3.5in]{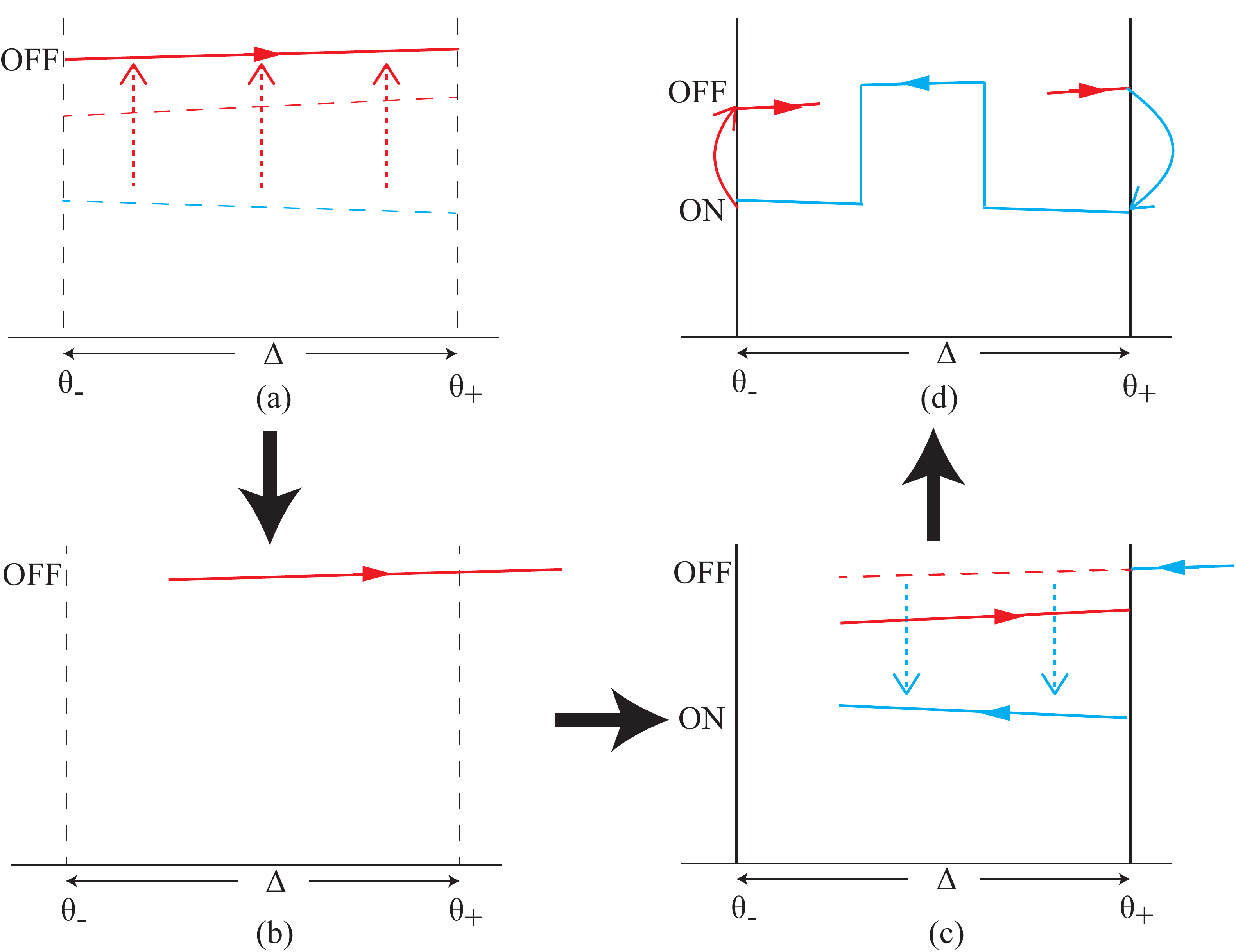}
\caption{Evolution of the TCL population distribution (y-axis) over temperature (x-axis) when all TCLs are are forced to stay OFF for a certain duration: (a) On receiving signal, all TCLs switch to OFF state. (b) The population distribution simply keeps moving to the right, crossing the upper deadband limit, with no natural switching at either of the deadband limits. (c) At the end of the desired duration, a signal is sent to TCLs to return to their original state which results, in particular, in all TCLs with $\theta>\theta_{+}$ turning ON. (d) After all TCLs return inside the originally set temperature range, a kink (higher than normal density of TCLs) and a hole (zero density over temperature) circulate with time.
Notation is explained in Fig.~\ref{fig:legends}}
 \label{fig:unsafe_switch-total}
\end{figure}
\begin{figure*}[htp]
\centering
\subfigure[{TCLs switched OFF for 10 minutes}]{\includegraphics[width=2.65in]{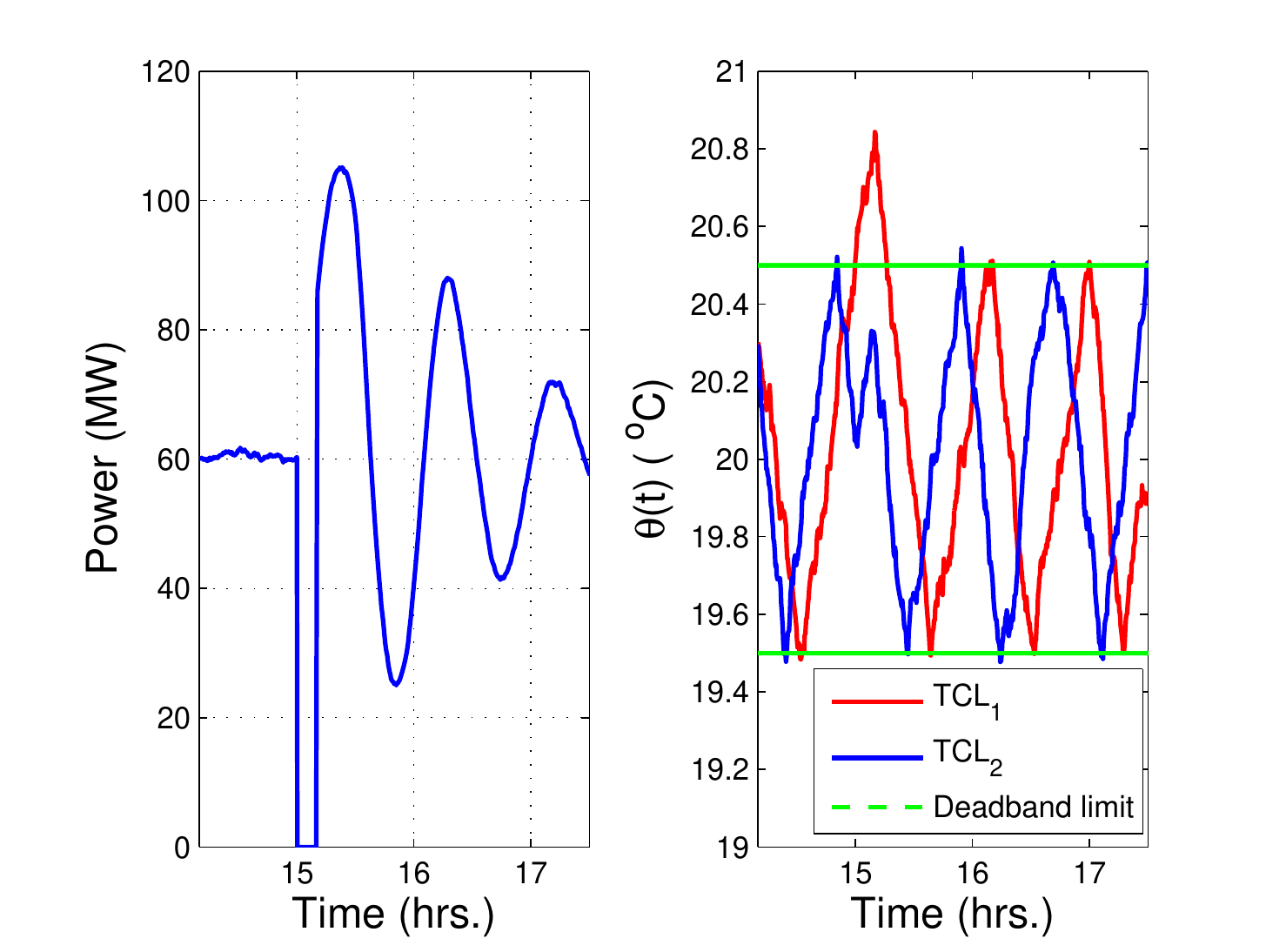}\label{fig:unsafe_sw_OFF}}\hspace{0.01in}
\subfigure[{TCLs switched ON for 10 minutes}]{\includegraphics[width=2.65in]{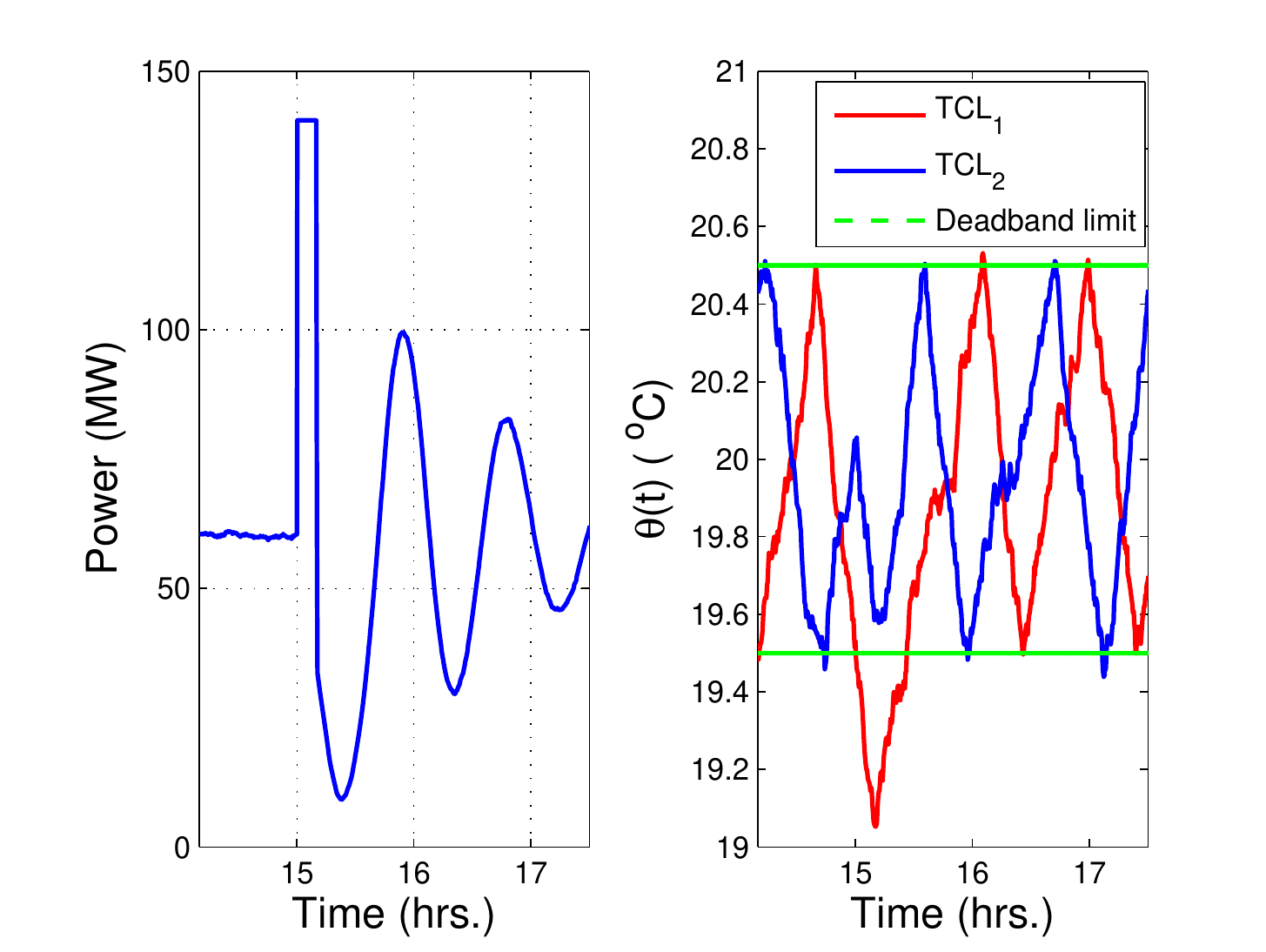}\label{fig:unsafe_sw_ON}}
\caption[]{(a)(left) Aggregate power consumption when all TCLs are forced to switch OFF for 10~min at 15 hrs. (a)(right) Red and blue curves show temperature evolution of two individual TCLs that are OFF and ON, respectively, at 15 hrs. (b) Same as in (a) but for the case when initially all TCLs are requested to switch ON for 10~min.}
\label{fig:unsafe_sw}
\end{figure*}
While undisturbed, the TCL dynamics remain uncorrelated, however, some forms of control can result in potentially dangerous correlations. For example, suppose that our goal is to reduce power consumed by all TCLs during a ten minute period and then return all TCLs to their individual dead bands.  Naively, one may try switching off all TCLs for 10 minutes and then sending a signal to all of them to start operating according to the initial conditions  \cite{kirby08,kirby07,heffner,kirby}. {Fig.~\ref{fig:unsafe_switch-total} shows that such a protocol would create a non-steady-state distribution with ``kinks'' and ``gaps'' in the ON-and OFF-state distributions.}

{In Fig.~\ref{fig:unsafe_sw_OFF} we show our simulations of behavior of a population of 10000 TCLs\footnote{We set $R$, $P$ and $C$  to follow the lognormal distribution in the ensemble of TCLs, which was argued to be a reasonable approximation to a real situation \cite{callaway,perfumo}, and we chose them  to achieve a typical TCL cycle duration of approximately $45$ minutes with a typical duty cycle approximately $40\%$ at $~\theta_{amb} = 32~^oC$ and $\theta_s=20~^oC$. Number of TCLs is $N=10,000$; 
 width of the deadband $\Delta=1~^oC$;  
Parameters $C,R$ and $P$ have mean values $3~kW/^oC, 2 ~^oC/kW$ and $14~kW$, respectively, and a standard deviation of $\sigma_p=0.07$ of the corresponding mean; a zero-mean Gaussian noise with standard deviation of $0.052^oC/min^{0.5}$ was added to
(\ref{eq:micro})  to account for natural temperature fluctuations. } in response to such a control signal.  
At $t=15$ hours, all the ON-state TCLs are switched OFF and all the TCLs are kept in switched OFF state for 10~min.  The aggregate TCL power, in Fig.~\ref{fig:unsafe_sw_OFF}(left), drops to zero, however, after receiving the signal to return to the initial state, the aggregate power does not return to the initial steady state value (60MW).  Instead, the population consumes much more power (with a peak above 100MW). During the ten minutes without the temperature control, many of the controlled temperatures increased beyond the customer upper dead band $\theta_+$, as shown in Fig.~\ref{fig:unsafe_sw_OFF}(right), and a sudden signal to return to the initial temperature range forces a larger fraction of TCLs to switch to the ON state than had originally switched to the OFF state. This phenomenon is  known as a {\it cold load pickup} \cite{ihara,chong84}. Similar synchronization happens when TCLs are switched ON for a 10-min interval (Fig.~\ref{fig:unsafe_sw_ON}).}

After providing beneficial control during the first ten minutes, this naive scheme creates an uncontrolled power spike of a similar magnitude and opposite sign. Perhaps even worse, the following power consumption relaxes to the steady state not monotonously but rather showing pronounced power oscillations, which may persist for several cycles depending on the population's heterogeneity. These oscillations, which we will call {\it parasitic oscillations}, are sustained by the synchronization of individual phases of TCLs induced by simultaneous transition of a large number of TCLs into the same ON-state.  Fig.~\ref{fig:unsafe_sw}~(left) shows the inverse situation when all the TCLs are set to be ON for ten minutes  after which the originally-ON-TCLs are switched back from OFF to ON resulting in a similar damped oscillation.

Another ``naive" approach to control the aggregated power of TCLs is to impose a sudden shift of dead band positions \cite{callaway,acc12}{, as shown in Fig.~\ref{fig:unsafe_shift-total}. 
\begin{figure}[htp]
\centering
\includegraphics[height=3.5in]{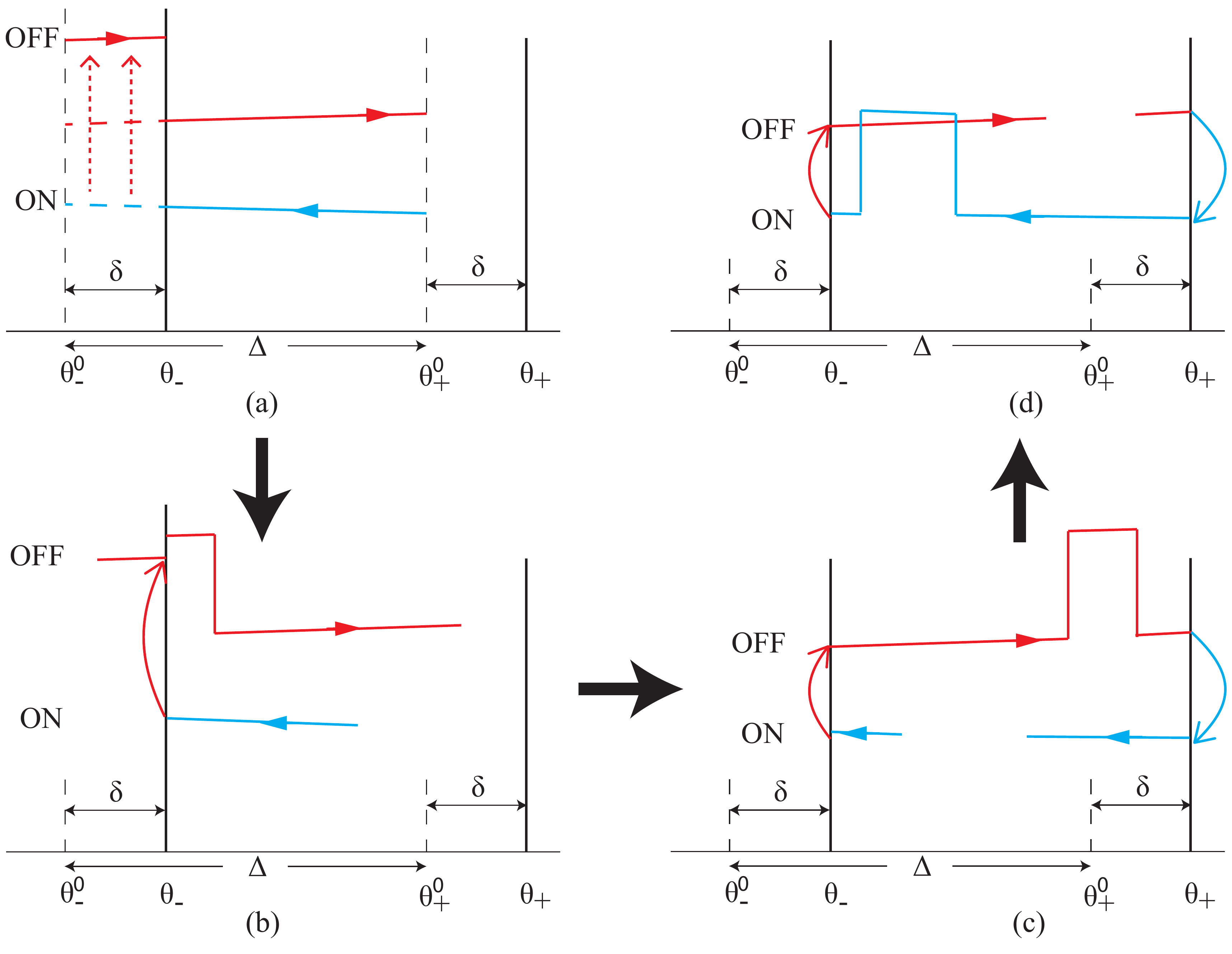}
\caption{Evolution of temperature distribution under sudden shift of deadband, by $\delta$, from $[\theta_{-}^0, \theta_{+}^0]$ to $[\theta_{-}, \theta_{+}]$: (a) As the deadband is shifted, a fraction of the initially OFF TCLs "instantly" switches to ON, creating a ``kink'' in the OFF-distribution. (b) While OFF-state TCLs are yet to reach the new upper deadband limit, ON-to-OFF switching already occurs at the new lower deadband limit. (c) As switching starts at both the new deadband limits, a ``hole'' is created in the ON-distribution. (d) The ``kink'' and ``hole' keep moving around the distribution.}
 \label{fig:unsafe_shift-total}
\end{figure}
\begin{figure*}[htp]
\centering
\subfigure[{Aggregate response under shift}]{\includegraphics[width=2.65in]{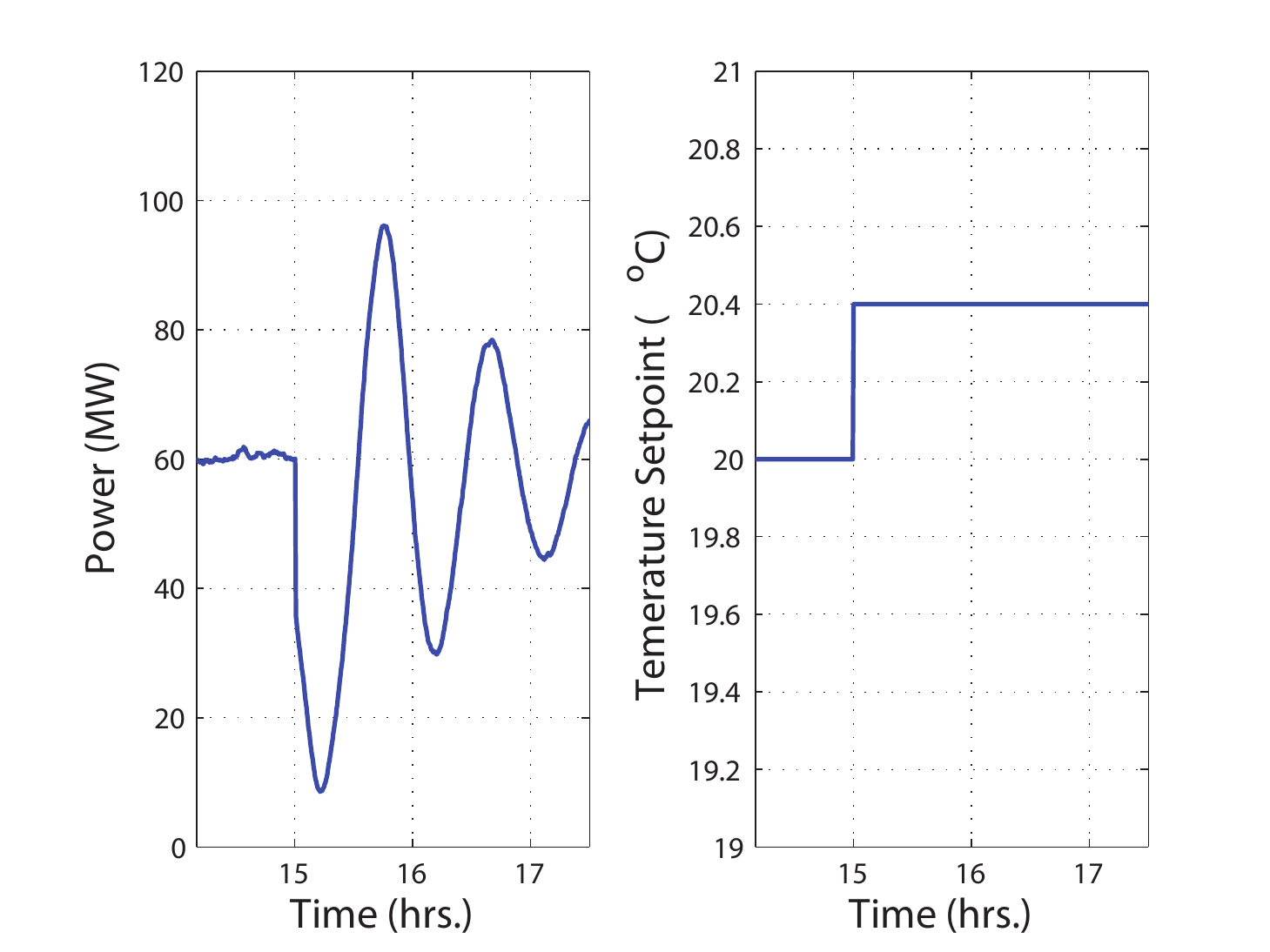}\label{fig:agg_unsafe_shift}}\hspace{0.01in}
\subfigure[{Typical response of individual TCLs}]{\includegraphics[width=2.65in]{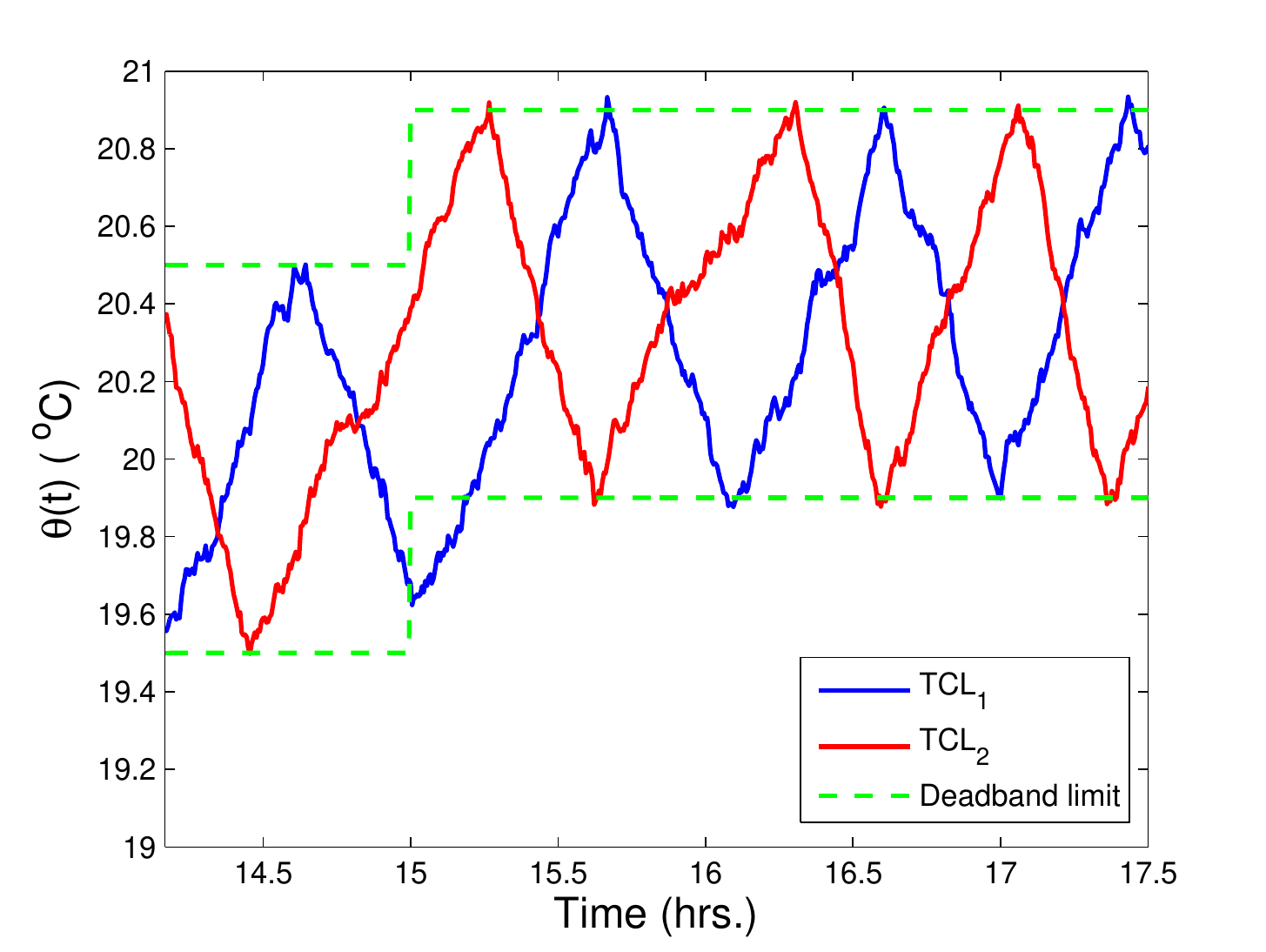}\label{fig:single_unsafe_shift}}
\caption[]{{(a)Aggregate response (left) under temperature setpoint shift (right)  by $0.4~^oC$. (b) Individual temperature evolution of two TCLs.}} 
\label{fig:unsafe_shift}
\end{figure*}
This approach of a sudden shift in deadband limits uniformly across the population also creates kinks and gaps in the temperature distribution, which sustain for a long duration. Fig.~\ref{fig:agg_unsafe_shift} shows our numerically calculated aggregated power response of a group of TCLs, which were initially operating in a steady state, after
their temperature set point was increased by $0.4~^oC$. As a result, some of the TCLs instantly found themselves lying below $\theta_-$, as shown in Fig.~\ref{fig:single_unsafe_shift}, and therefore switched OFF resulting in an immediate drop in the aggregate demand.} After this immediate drop, however, the aggregate demand undergoes large damped oscillatory mode due to essentially the same reasons as in previous example.

In summary, the synchronization of TCLs by external control signals may induce considerable power oscillations with amplitudes comparable to the total power consumed by the population at steady conditions. Control strategies that do not avoid such unwanted parasitic power oscillations may be more harmful than the power fluctuations that they attempt to remove.

\section{Safe protocol-1}\label{sec:SP1}
We start our discussion of safe strategies to create power pulses in the grid by considering a practically less interesting but more easy-to-follow example. Consider the strategy to control the population, which is illustrated in  Fig.~\ref{fig:safe2} and which we will term the safe protocol-1, or simply SP-1.  To implement any SP, an important assumption is that temperature controllers have additional embedded instructions. Specifically for SP-1, first, we assume that after receiving the control signal, TCLs are capable to save the information about the temperature that they are having at this moment. Second, we assume that TCLs are capable to compare their current temperature with their saved one. Finally, we assume that by reaching such a saved temperature, the TCLs can make transitions between ON and OFF states purely autonomously, i.e. without waiting for an external signal. Such additional instructions are elementary and can be easily added to temperature controllers of TCLs in most houses.

\begin{figure}[htp]
\centering
\includegraphics[width=3.7in]{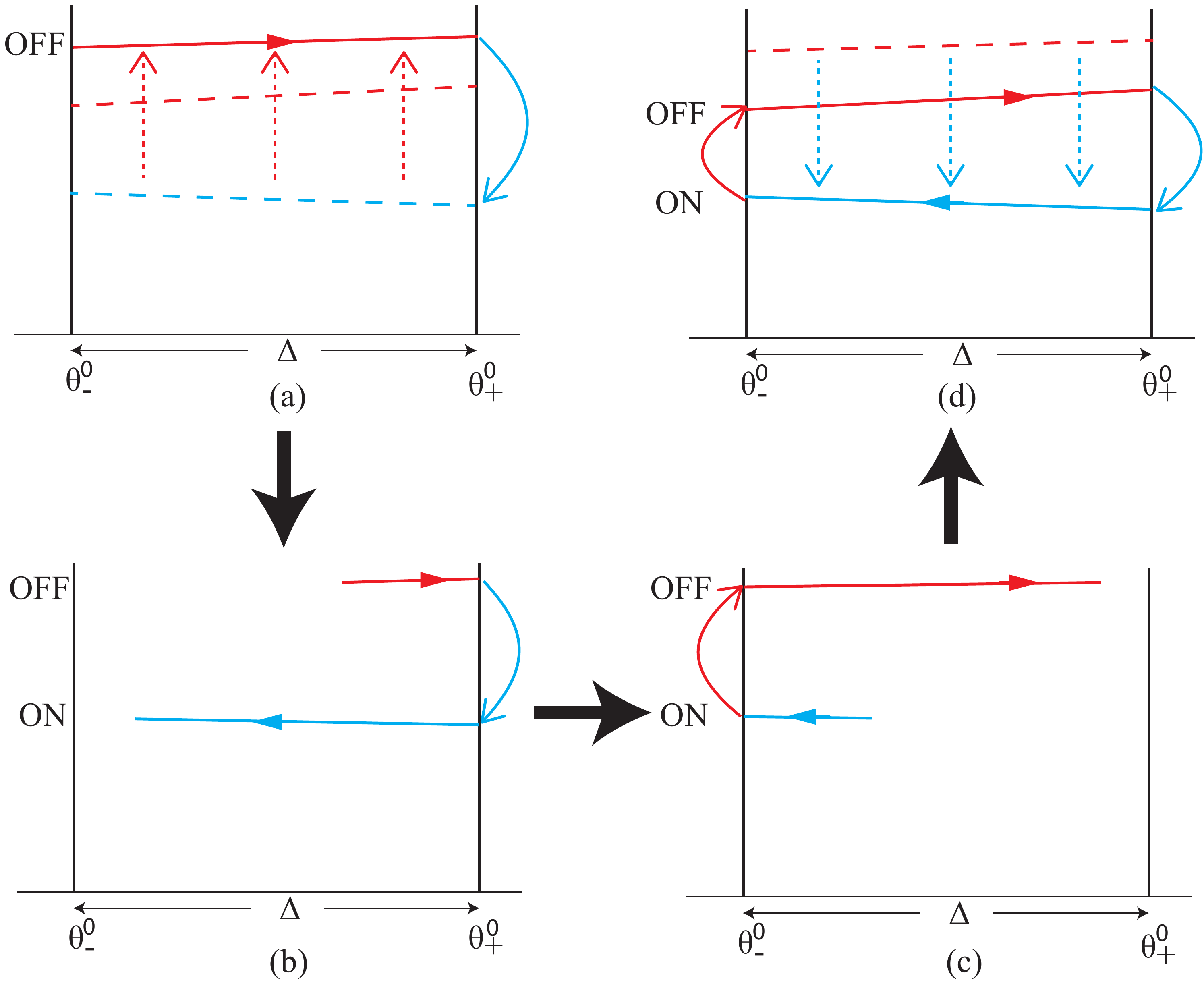}
\caption{Distribution of TCLs (y-axis) over temperature (x-axis) in the ON and OFF modes  during stages of safe protocol-1 (SP-1). (a) All TCLs are requested to switch OFF. (b) and (c) Intermediate stages of evolution of population distribution (y-axis is not to scale). (d) After all TCLs appear again in the OFF mode, TCLs that were in the ON mode before application of the protocol make autonomous transition from OFF to ON. As a result, 
the original steady state distribution is reproduced.} \label{fig:safe2}
\end{figure}

Having such additional capabilities, the generation of a safe pulse can proceed as follows:
{\begin{enumerate}
\item By receiving an external signal, TCLs from the ON branch switch to OFF (Fig.~\ref{fig:safe2}(a)).
\item  At this moment, TCLs that switched from ON to OFF save their temperatures.
\item All TCLs of the ensemble continue working without changing the position of the temperature operation band, as shown in Fig.~\ref{fig:safe2}(b)-(c).
\item When temperature of TCLs that switched OFF at step-1 reaches the same temperature as the saved one when it is again in the OFF-mode (this stage is achieved  after one period of the cycle, as shown in Fig.~\ref{fig:safe2}(d)), these TCLs autonomously switch to the ON state.
\end{enumerate}}
A similar process works if the power needs to be increased at initial moment, by switching TCL's states, initially, from OFF to ON at step-1.

{Fig.~\ref{fig:switch} shows two simulation results of application of SP-1 to instantaneously decrease (Fig.~\ref{fig:switch_off}) and increase (Fig.~\ref{fig:switch_on}) power in a heterogeneous population of TCLs. While after the immediate change, the aggregate power recovers by making a swing in the opposite direction, it quickly settles down back to its original steady state value after one cycle duration of a typical TCL in the population.
This becomes possible because the original distribution is reproduced after completion of stages of SP-1 (Fig.~\ref{fig:safe2}(d)). Fig.~\ref{fig:switch_off}(right) illustrates that, while TCL$_1$ that was OFF at the start of SP-1 at time 15 hrs continues with its natural cycle, TCL$_2$ switched its state from ON to OFF and then switched back from OFF to ON after a duration equal to its natural cycle period. An equivalent behavior occurs when TCLs are initially switched ON, Fig.~\ref{fig:switch_on}.}

\begin{figure*}[htp]
\centering
\subfigure[{TCLs switched OFF}]{\includegraphics[width=2.65in]{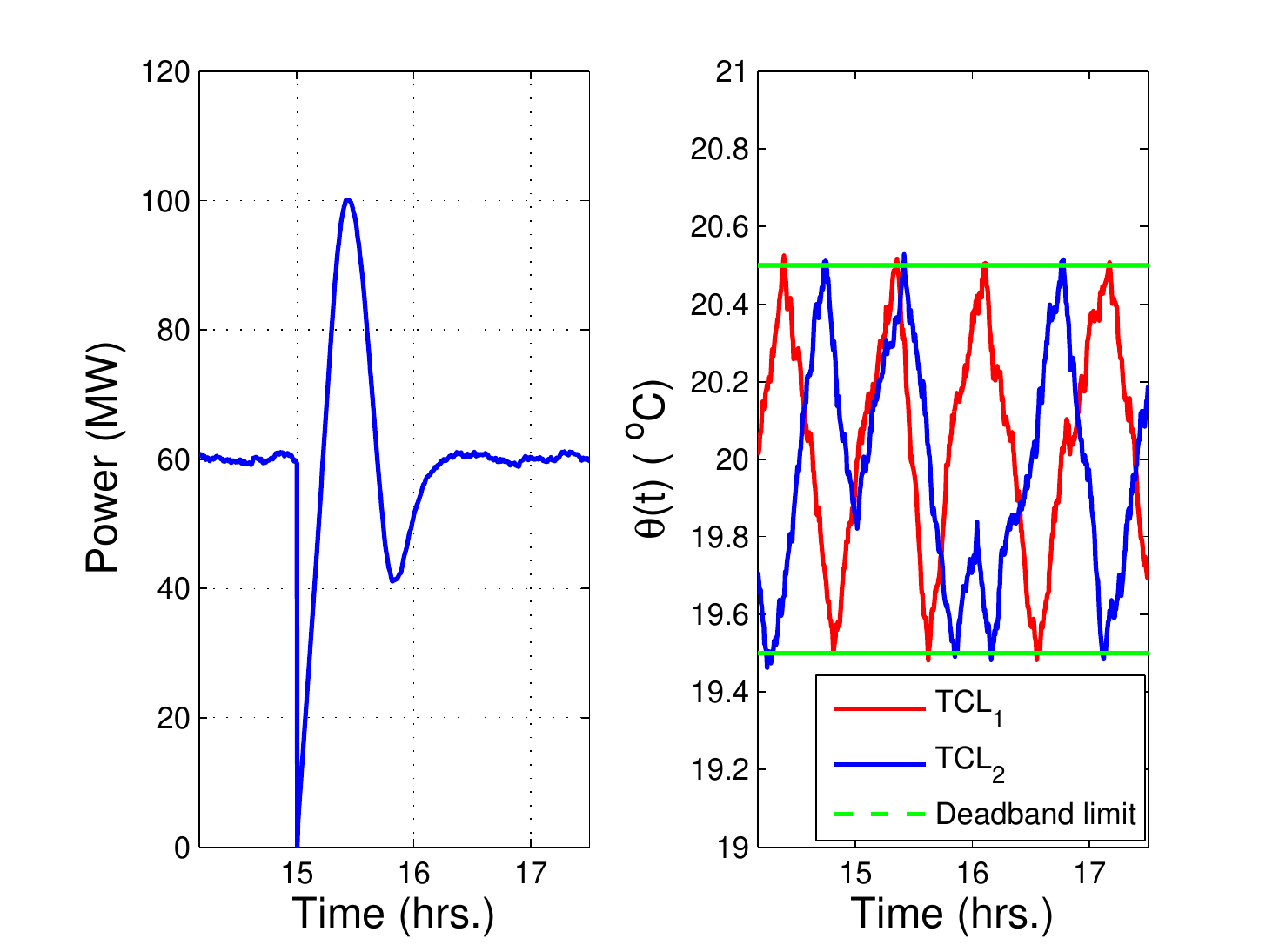}\label{fig:switch_off}}\hspace{0.01in}
\subfigure[{TCLs switched ON}]{\includegraphics[width=2.65in]{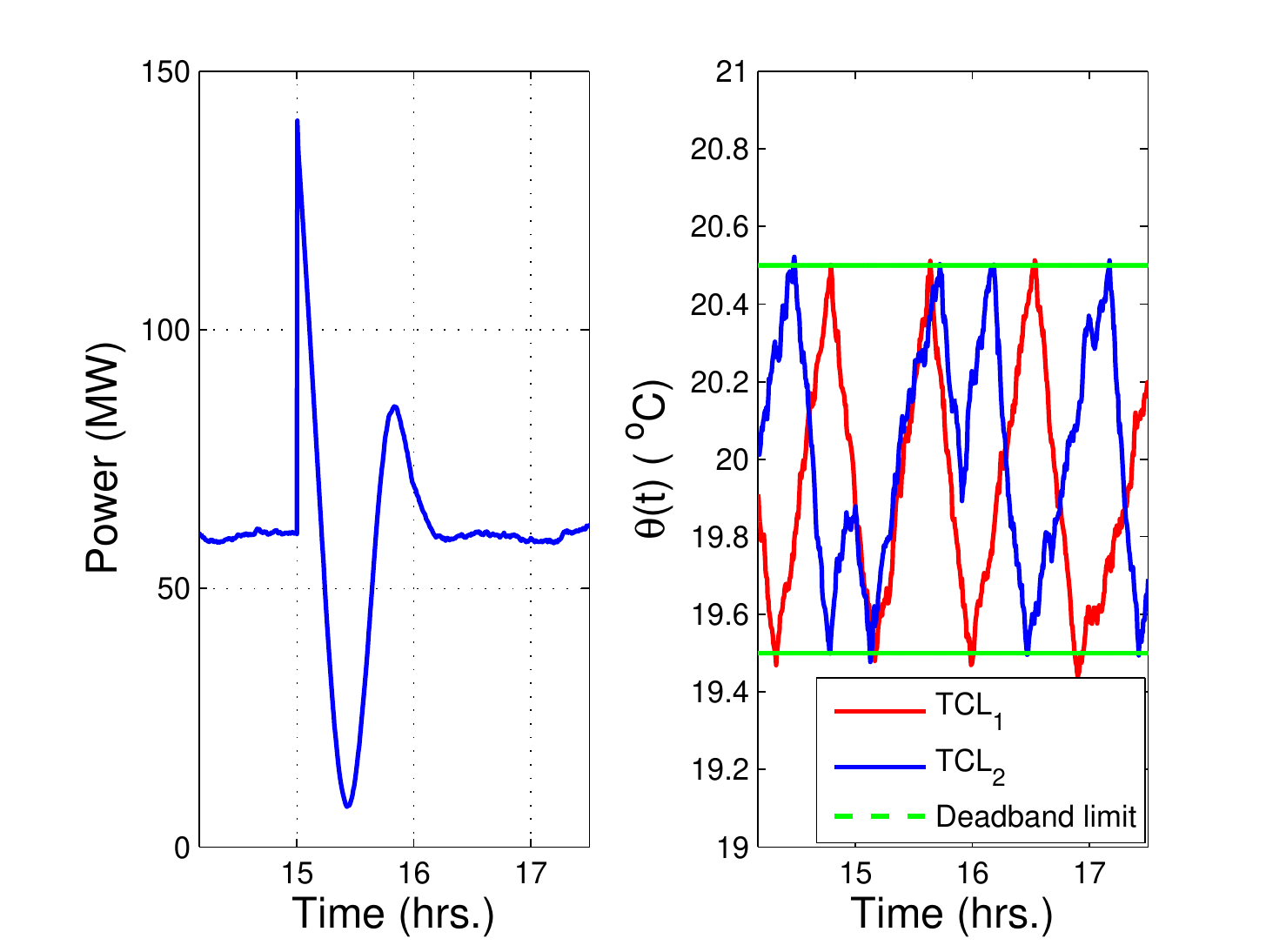}\label{fig:switch_on}}
\caption[]{{Aggregate power (left) and individual temperature (right) response of TCLs under SP-1. Notation is explained in caption for Fig.~\ref{fig:unsafe_sw}.}}
\label{fig:switch}
\end{figure*}
%

Here we note that Fig.~\ref{fig:safe2} shows that the SP-1 ends with equilibrium distribution of TCLs over temperature if the population is homogeneous, i.e. if all parameters of TCLs are identical with identical cycle duration. A real population would be certainly heterogeneous. Nevertheless, Fig.~\ref{fig:switch} for  the power response shows that SP-1 does not lead to unwanted power oscillations by the end of the protocol even if it is applied to a {\it large} heterogeneous population described in the footnote in Section II. The reason is that for any TCL in a large  heterogeneous population,  there are many other TCLs with similar parameters. That is, each large population can be considered as a set of many almost homogeneous sub-populations. Since a SP does not lead to unwanted power oscillations in {\it any} homogeneous sub-population, it  does not induce such oscillations when it is applied to a full heterogeneous ensemble of TCLs either.

Safe protocol SP-1 has two additional useful features:
\begin{itemize}
\item Power increases almost instantly in response to the external signal so that the protocol can be used to offset  fast power fluctuations. In reality, of course, there is always a time delay between receiving the signal and changing the mode of operation of a TCL, which restricts the speed of control.  Response time of a realistic TCL to external control, however, is as small as a fraction of a minute \cite{kirby}.
\item Since the temperature deadband remains undisturbed, this protocol guarantees that we never leave the temperature operation band that was set by a customer. Hence, application of this protocol should be practically unnoticeable by owners of TCLs.
\item Finally, and this will be a generic property of all safe protocols discussed in this article, the safe protocols are also safe from the privacy point of view. They do not require the utility company to have information about any specific TCL. Only aggregate information about the total power consumed by the ensemble is needed to estimate the power response to a SP.
\end{itemize}

However, the applicability of SP-1 alone  is limited because SP-1 cannot generate any extra energy by the time of its completion in comparison to the energy generated by the same ensemble at the steady state during the same time. At the end of SP-1, the ensemble of TCLs is brought to the initial distribution, exactly the same state in which ensemble would be if no external control is applied. Intuitively, the conservation of energy argument suggests that SP-1, while producing a nontrivial power pulse, does not save any extra energy by the time of its completion, which is indeed the case as we prove in the Appendix.

\section{Safe Protocol-2}\label{sec:SP2}

The next protocol, SP-2, shows that the drawback of the previous protocol, i.e. the inability to store/release energy by the time of its completion, can be eliminated by choosing a different strategy. In particular, SP-2 creates a  power pulse with a net release of energy, in comparison to the undisturbed ensemble, to the grid. This is achieved during a shift of the positions of TCL's temperature operation bands by a fixed amount of temperature.
This protocol has been already presented previously \cite{acc12}, so we only describe it here briefly because it will be used later in our discussion.
\begin{myindentpar}{0.3cm}
To describe the SP-2, we should introduce the {\it transition points} that are {\it intermediate} hysteresis deadband limits that become operative starting from the instant the temperature shift is initiated until the new set of deadband limits are applied across the population. Let $\theta_-^0$ and $\theta_+^0$ be the lower and upper deadband limits before a shift of the deadband position to the new limits  $\theta_-=\left(\theta_-^0+\delta\right)$ and $\theta_+=\left(\theta_+^0+\delta\right)$. If the temperature setpoint is to be increased by an amount $\delta$, then the transition points would be $\theta_-^0$ and $\left(\theta_+^0+\delta\right)$. While for a decrease in setpoint by an amount $\delta$, the transition points would be $\left(\theta_-^0 - \delta\right)$ and $\theta_+^0$. Note that positions of the transition temperature points do not coincide with the positions of the new dead band limits.
\end{myindentpar}
The SP-2 was constructed as follows:
 \begin{enumerate}
 \item A TCL continues to stay in its present state (ON or OFF) until it hits one of the transition points.
 \item Once it has reached one of the transition points, the new pair of deadband limits, $\theta_-=\left(\theta_-^0+\delta\right)$ and $\theta_+=\left(\theta_+^0+\delta\right)$, starts governing its thermal dynamics.
 \end{enumerate}
Here $\delta>0$ for the increase in the setpoint and $\delta<0$ for the decrease in the setpoint. Fig.~\ref{fig:safe1} shows how this protocol works when the setpoint temperature is increased by $\delta$. The aggregate power demand of the population of TCLs attains a new steady state corresponding to the shifted deadband positions within a thermostat's cycle duration. Fig.~\ref{fig:safe_increase} illustrates how SP-2 performs when the setpoint is increased.{The aggregate response in Fig.~\ref{fig:agg_safe_increase} shows that the power drops for a duration equal to the time period and then stabilizes to the new steady state value without any parasitic oscillation. Fig.~\ref{fig:single_safe_shift} shows two typical responses of individual TCLs. TCL$_2$ was initially OFF and hence simply continues with the new deadband limits, while TCL$_1$ which was ON at the initiation of SP-2 continues with its old deadband limits until it hits the lower deadband limt after which it adopts the new limits.}
\begin{figure}[htp]
\centering
\includegraphics[width=4in]{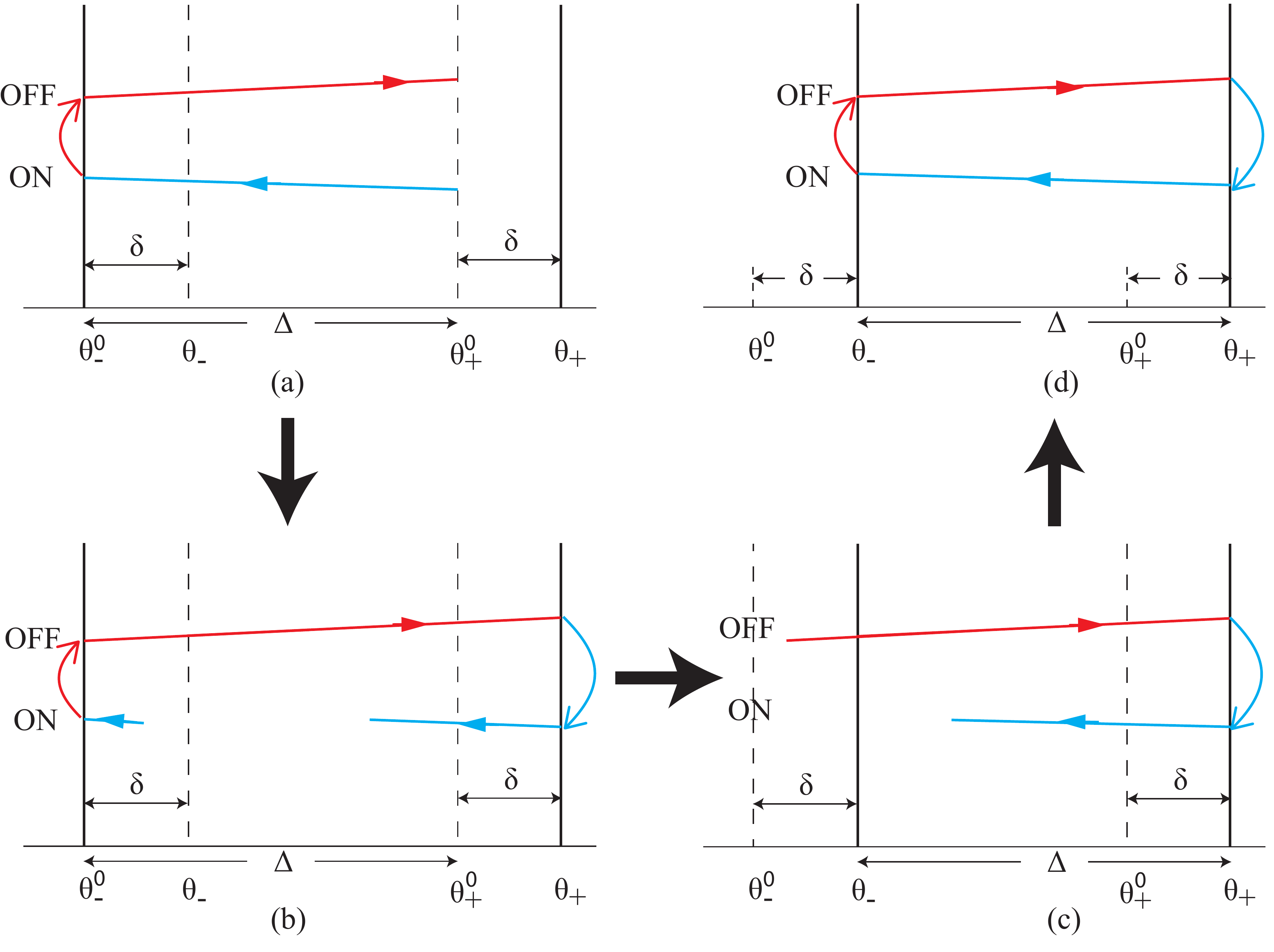}
\caption{Evolution of TCL's distribution over temperature in the ON and OFF modes during states of Safe protocol-2 (SP-2), which is based on shifting temperature range limits from initial, ($\theta_{-}^0, \theta_{+}^0)$, to  new ones, ($\theta_{-}, \theta_{+}$).} \label{fig:safe1}
\end{figure}

\begin{figure*}[htp]
\centering
\subfigure[{Aggregate response under SP-2}]{\includegraphics[width=2.65in]{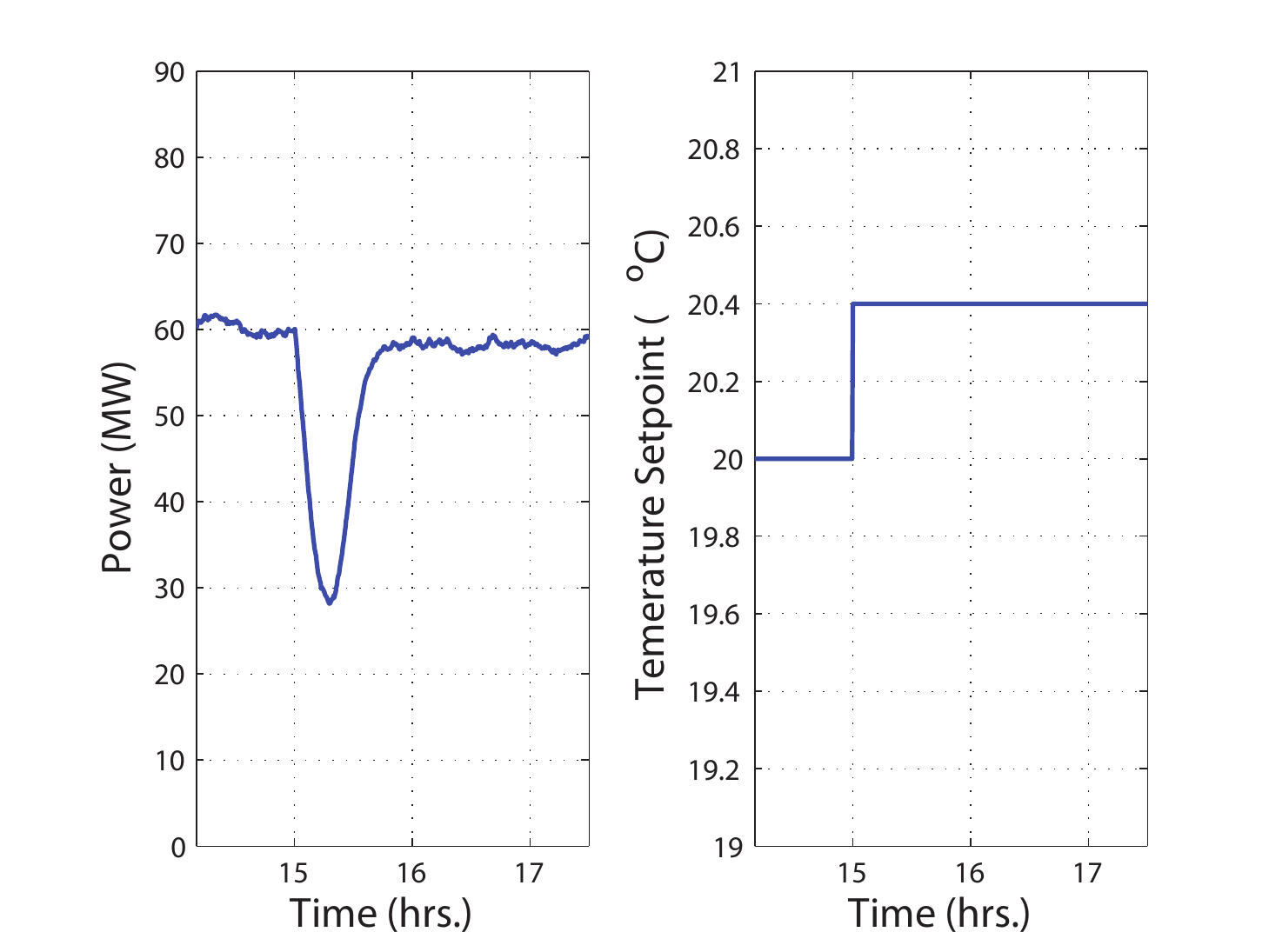}\label{fig:agg_safe_increase}}\hspace{0.01in}
\subfigure[{Typical response of individual TCLs}]{\includegraphics[width=2.65in]{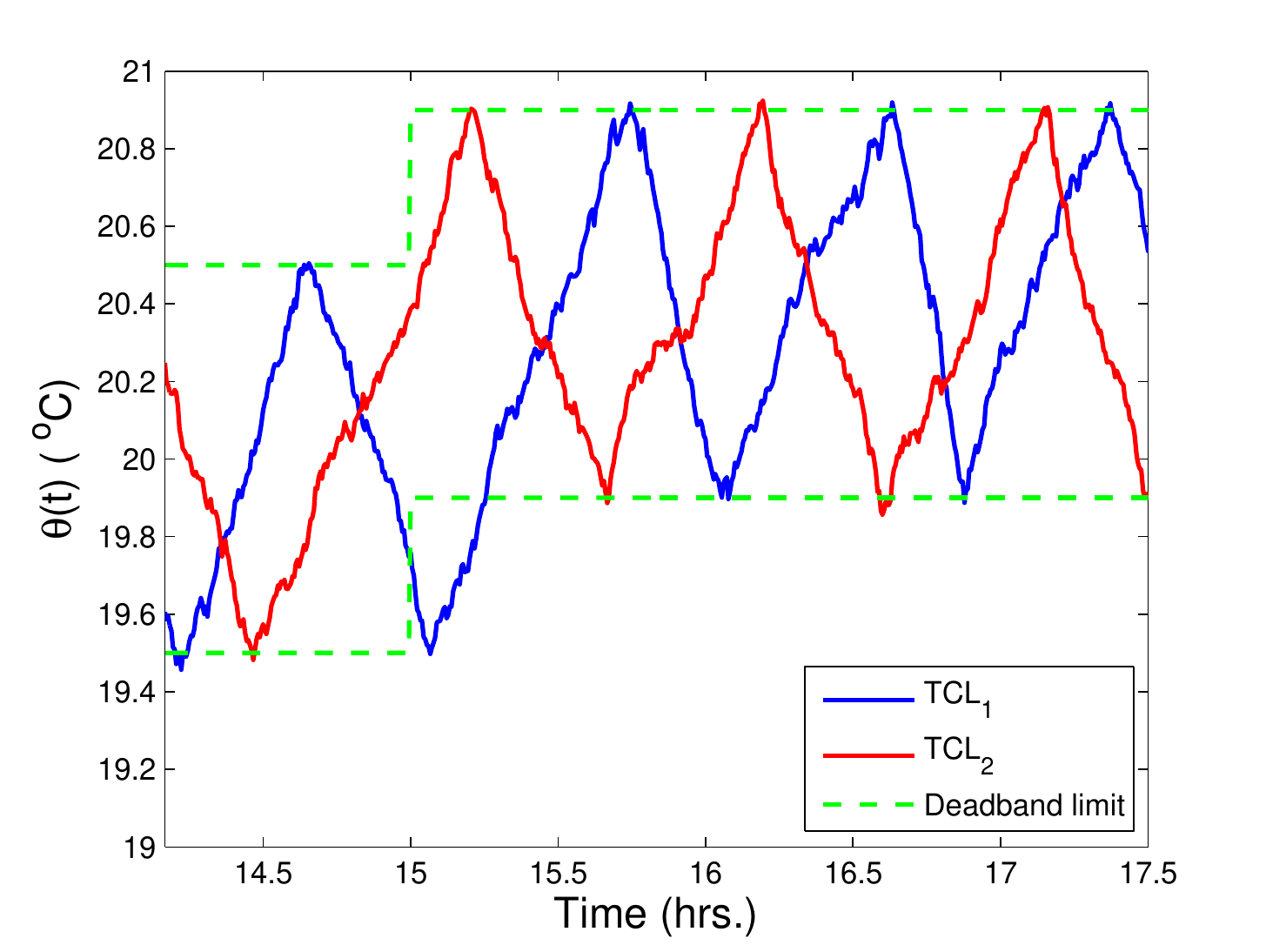}\label{fig:single_safe_shift}}
\caption[]{{Aggregate power and individual temperature response of TCLs under SP-2 when the temperature setpoint is increased by $0.4~^oC$. Notation is explained in caption for Fig.~\ref{fig:unsafe_shift}. }}
\label{fig:safe_increase}
\end{figure*}

Besides the absence of parasitic oscillations, this safe protocol (SP-2) has several new advantages:
\begin{itemize}
\item It can be used to switch the set point temperatures almost in arbitrary range, limited only by customer tolerance level.
\item This protocol practically does not require extra switching of the device branch per time of the driving protocol. The completion of the protocol takes about the cycle duration time and by this time all TCLs switch their mode (from OFF to ON or from ON to OFF) only one time. In comparison, SP-1 takes about the same time before its completion during which each TCL makes 4 mode switching events, while a steady state cycle includes only 2 switching events between OFF and ON states per one TCL cycle.This is important because more frequent switching of the TCL operation mode accelerates the fatigue of the device circuits and may be considered the major cost of application of the control.
\end{itemize}

By careful examination of Fig.~\ref{fig:safe1}, one can recognize the reason why SP-2 does not lead to unwanted power oscillation: All steps are chosen to avoid creation of "kinks"  or "holes" in the final distribution of TCLs over temperature.

The major drawback of  SP-2 is its ``slowness''. Power does not change abruptly after receiving a control command. It takes a certain while to reach the extremum of power consumption by an ensemble. Hence SP-2, alone, is restricted to very specific applications that do not require fast response, e.g. for offsetting relatively slow ($\sim$30-60 minutes in our case) but possibly large power fluctuations.
For example, one can envision a situation when a power line breaks down and one wants to decrease the burden by TCLs on the Power Grid during a relatively large time interval, e.g.  to provide time for a backup coal power plant to increase power output. Such situations are commonly resolved by relatively expensive, in comparison to standard power generators, spinning reserves, which annual cost on the scale of such a country as the USA is considerable. This justifies the importance of the SP-2 for helping to resolve such problems.

\section{Safe Protocols to Generate Short Power Pulses}\label{sec:SP3}

TCL's ability to respond to control signals at a fraction of a minute suggests that they may not be simply considered as replacement to standard spinning reserves. They can be  also used to increase stability of the power supply in the power grid by offsetting sharp power spikes. For such applications, it is desirable to design control protocols that are safe against synchronization problems but react much faster and create very short power pulses.
While safe protocols SP-1 and SP-2 could create power profiles without unwanted power oscillations, none of them is suitable for  offsetting short (2-5 minutes) power spikes because the pulses that they generate continue for a duration of a  cycle of a typical TCL in the population.

\begin{figure}[htp]
\centering
\includegraphics[width=4in]{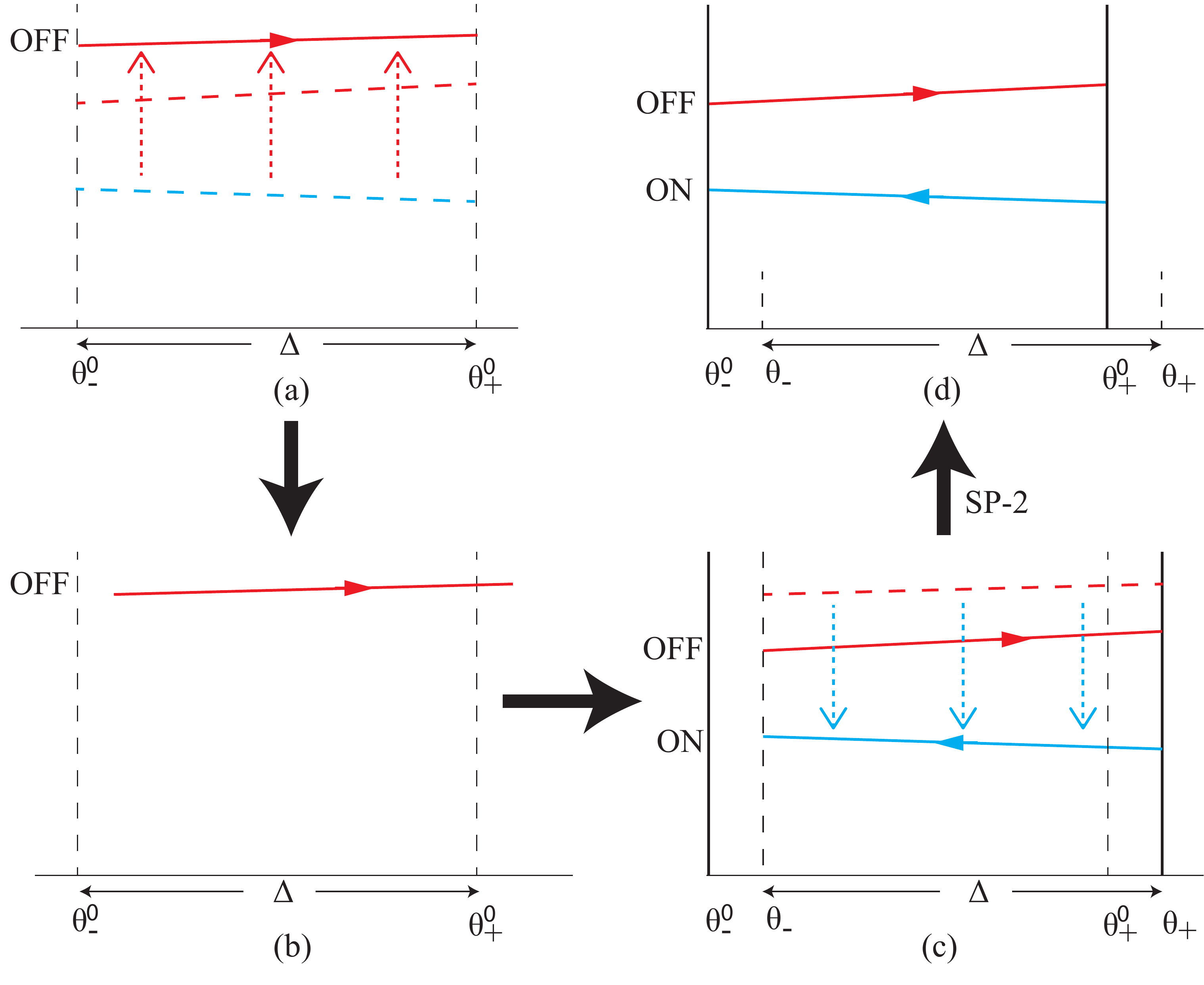}
\caption{SP-3 based on switching (to OFF) and shifting, followed by SP-2, to bring the population of TCL's to the initial setpoints.} 
\label{fig:safe3}
\end{figure}
Here we construct the protocol, referred to as  SP-3, that generates a short power pulse of possibly large amplitude. If we consider that time of response of a power plant to fluctuations is e.g. 8~minutes, then what one would need to increase stability of the power supply is to be able to create shorter sharp pulses of duration, e.g., $\Delta t=3$~minutes with a TCL ensemble and then return all TCLs to initial setpoints avoiding sharp pulses of opposite sign and power oscillations. This is achieved by the protocol, which stages are shown in Fig.~\ref{fig:safe3}.
The mechanism of SP-3, when the need is to decrease power, is as follows:
\begin{enumerate}
\item By receiving the signal, TCLs in the ON mode switch to OFF (Fig.~\ref{fig:safe3}(a)) and save information about their original deadband limits, $\theta_{-}^0$ and $\theta_{+}^0$.
\item {All TCLs continue on OFF mode for a duration $\Delta t$ (say, 3~min), as shown in Fig.~\ref{fig:safe3}(b). It is to be noted, that during this time $\Delta t$, no switching takes place at either of the deadband limits.
\item At the end of this interval, TCLs that were forced to switch the branch at step-1 now should switch from OFF to ON mode. The result is shown in Fig.~\ref{fig:safe3}(c).}

Note that at this stage no ``kinks'' or ``holes'' in the distribution  were created.
 Steps 1-3 will produce sharp drop of power consumption for a time $\Delta t$ and resulting distribution at the end of step-3 will be close to the new equilibrium because, after step-3, we did not create any kinks or holes in TCL's distribution. These steps, however, were also discussed in Section 2, in which we showed that if, following step 3, we simply ask all TCLs to return to initial conditions, parasitic power oscillations can be considerable. In order to fix this problem, {we propose an application of SP-2 to effect a slow return of deadband limits}. 

\item After step-3, the ensemble is brought to the initial state {(Fig.~\ref{fig:safe3}(d))} by application of a specific version of SP-2 (refer to Section~\ref{sec:SP2}). SP-2 that was described in the previous section requires information about the current deadband limits and the new deadband limits (or, alternatively the amount of shift in setpoint temperature), to use as transition points. But the ``current'' deadband temperature limits, at the end of step-3, $\theta_{-}$ and $\theta_{+}$ are not exactly known for a heterogeneous population. One can show, however, that this knowledge is not needed. We can simply instruct all TCLs that switch to ON-branch after step-3 to start operation according to the old, saved, temperature deadband ($\theta_-^0$ to $\theta_+^0$) after step-3. As for the TCLs that were in the OFF-branch before step-1, we  instruct them to continue staying on this branch for a duration $\Delta t$ from the moment they reach the temperature $\theta_{+}^0$ (saved in step-1). This can be achieved by adding timers to the temperature controllers so that the timer starts autonomously after reaching $\theta_{+}^0$. Only after completing such extra $\Delta t$ delay in the OFF-mode, these TCLs switch from OFF to ON and then set their thermostats to work according to the old band position. This set of instructions will not create kinks in the distribution and will be equivalent to application of SP-2, with effective transition point at $\theta_{-}^0$ and $\theta_+$, following step-3.
Note that step-4 is necessarily slow so that sharp and strong drop of power during steps 1-3 is compensated by a slow and weak increase of power during step-4.
\end{enumerate}

Fig.~\ref{fig:safe_sw_exact} shows simulation results of the application of SP-3. A negative pulse of width 3~min is generated following the steps 1-4 of SP-3, as shown in Fig.~\ref{fig:safe_sw_exact}~(right). The sharp and exact positive pulse is followed by a slow and weak change in aggregate power in the opposite direction for about a cycle duration before returning to the original steady state consumption level. A similar result, but in opposite direction, happens when a positive pulse of width 3~min is generated (Fig.~\ref{fig:safe_sw_exact}~(left)).
\begin{figure}[htp]
\centering
\includegraphics[width=2.65in]{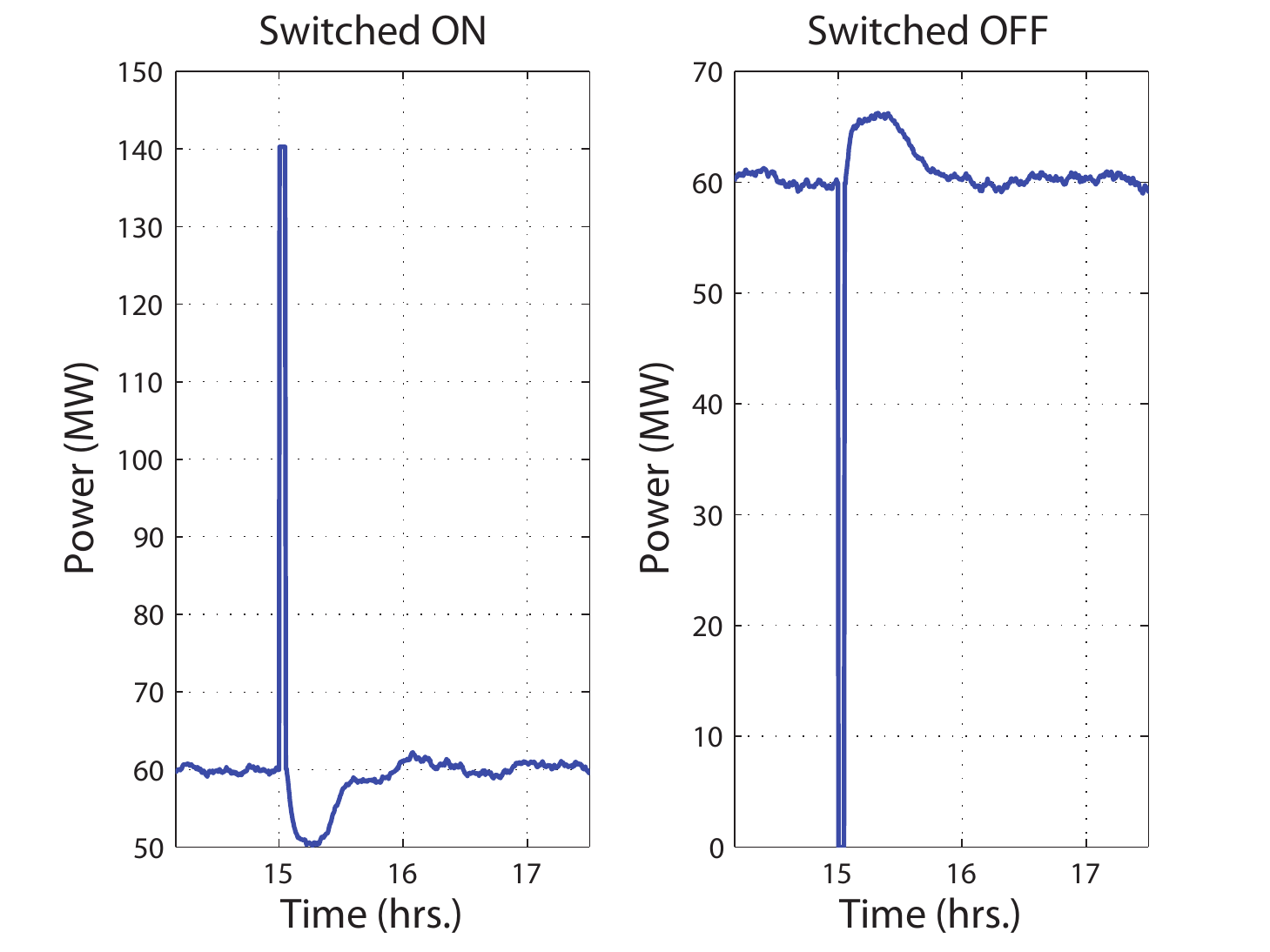}
\caption{Power response under SP-3 generating a positive pulse by switching ON (\textit{left}) or negative pulse by switching OFF (\textit{right}), both of a width of 3~min.}
\label{fig:safe_sw_exact}
\end{figure}

This method is particularly suitable for applications, in which the required pulse width is much smaller than the cycle duration, e.g. less than 5 minutes. This strategy opens the possibility of an application of TCLs to a class of problems that standard spinning reserves cannot resolve.


\section{Hybrid protocols}\label{sec:SP4}

In previous sections, we described three protocols that generate power pulses of different shapes with high degree of control. In real situations, those shapes separately may not be sufficient to achieve all utility goals - e.g. creating arbitrary shape of power response. The latter goal can be achieved by combining different pulses. For example, a train of power spikes can be eliminated by a train of SP-3 pulses sent to sub-populations of TCLs. However, applying SP-3 alone in a series of pulses may not be the best strategy for some goals. For example, if TCLs are used to replace standard spinning reserves, then the typical situation that they may need to resolve is to provide extra power supply at fixed level for relatively long duration, e.g. of 30 minutes. In this section, we show an example of a hybrid protocol that can be used in such a specific situation.

Suppose that in response to a damage of a power line, a spinning reserve should react almost instantly and supply extra power in the Power Grid during ~30 minutes.
In principle, extra power can be supplied by SP-2  but, alone, it is slow. On the other hand, SP-1 responds "instantly" but does not supply an overall extra power.
\begin{figure}[htp]
\centering
\includegraphics[height=4in]{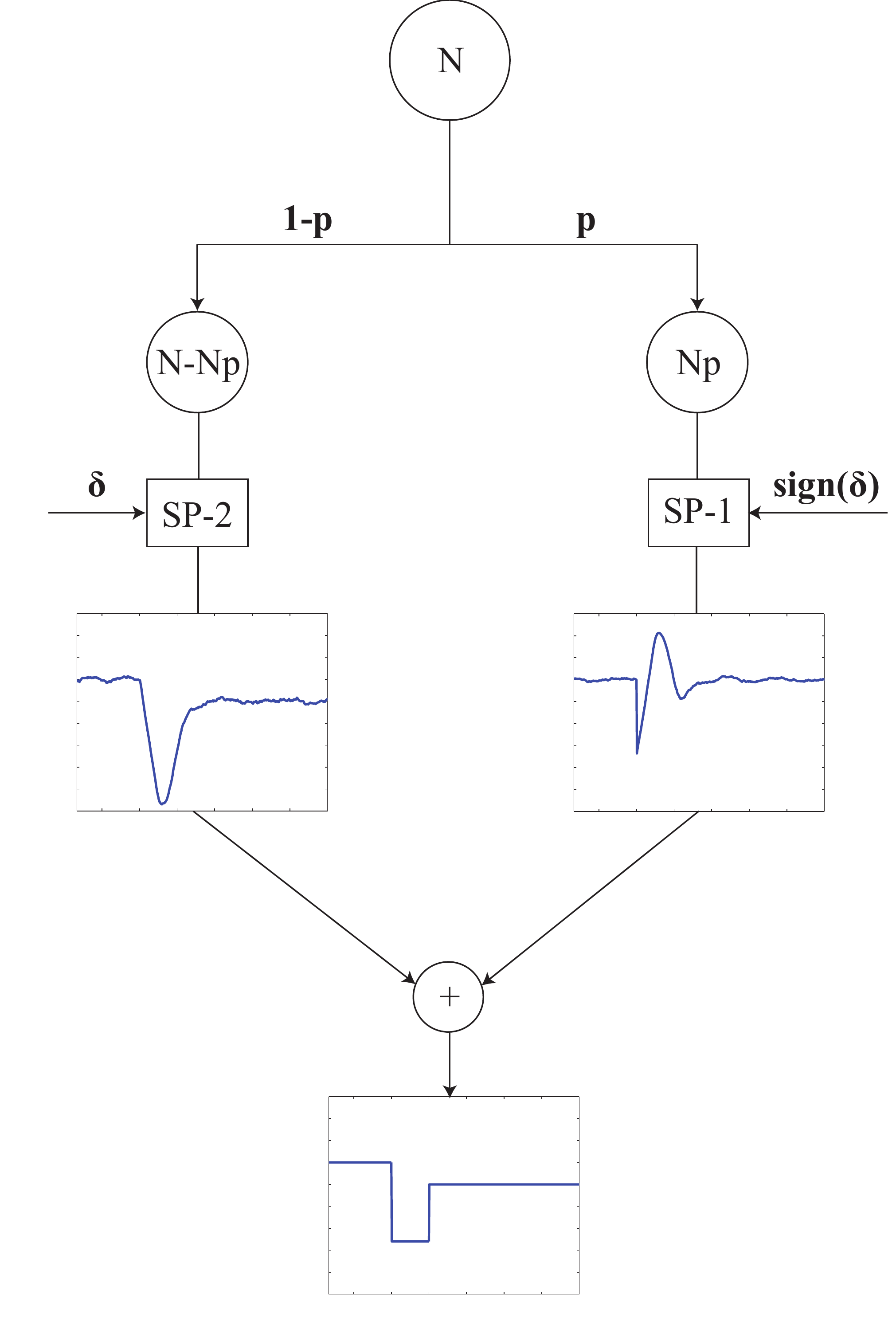}
\caption{{Combination of SP-1 and SP-2 into a hybrid protocol to generate a (negative) pulse. $N$ is the total number of TCLs in the population, $p$ is the fraction of TCLs that receive SP-1 signal and $(1-p)$ is the fraction of TCLs that receive SP-2 signal. }} \label{fig:safe4}
\end{figure}
This suggests that if we split our ensemble of TCLs into two parts - one that receives SP-1 and one that receives SP-2 then we can generate a desired power profile that changes power consumption almost instantly and sustains it at the required level for sufficiently long duration.
In Fig.~\ref{fig:safe_increase}~(left) and Fig.~\ref{fig:switch}~(right)  both power profiles of
SP-1 and SP-2 take about a TCL cycle duration before settling down to a steady state value, and  the power profiles experience peaks at around the same time in opposite directions. Thus if a section of a population follows SP-1, and the rest of TCLs follow SP-2, the aggregate power can be made to follow a square-like pulse profile before settling down after one cycle duration. Of course, the challenge is to design the population size undergoing protocols SP-1 and SP-2, and also the amount of setpoint shift to be used for SP-2. Fig.~\ref{fig:safe4} shows a graphical illustration of how a hybrid SP is supposed to work.

The following parameters are important to quantify and design the protocol:
\begin{itemize}
\item $p$: the fraction of the population, of size $N$, undergoing SP-1 (while the rest of the population, of size $(N-Np)$ undergoes SP-2), and
\item $\delta$: the amount of the temperature shift to be used for SP-2. To generate a negative pulse, $\delta>0$ (i.e. $sign(\delta)=1$) and accordingly in SP-1 all the TCLs are to be switched OFF. Opposite actions are to be taken to generate positive pulse.
\end{itemize}

Fig.~\ref{fig:safe_sw_nonexact} presents two sample simulation results of application of this strategy for generating a positive (Fig.~\ref{fig:safe4_on}) and a negative pulse (Fig.~\ref{fig:safe4_off}).
\begin{figure*}[htp]
  \centering
  \subfigure[Positive pulse: $p=0.3$, $\delta=-0.9~^oC$]{\includegraphics[width=2.65in]{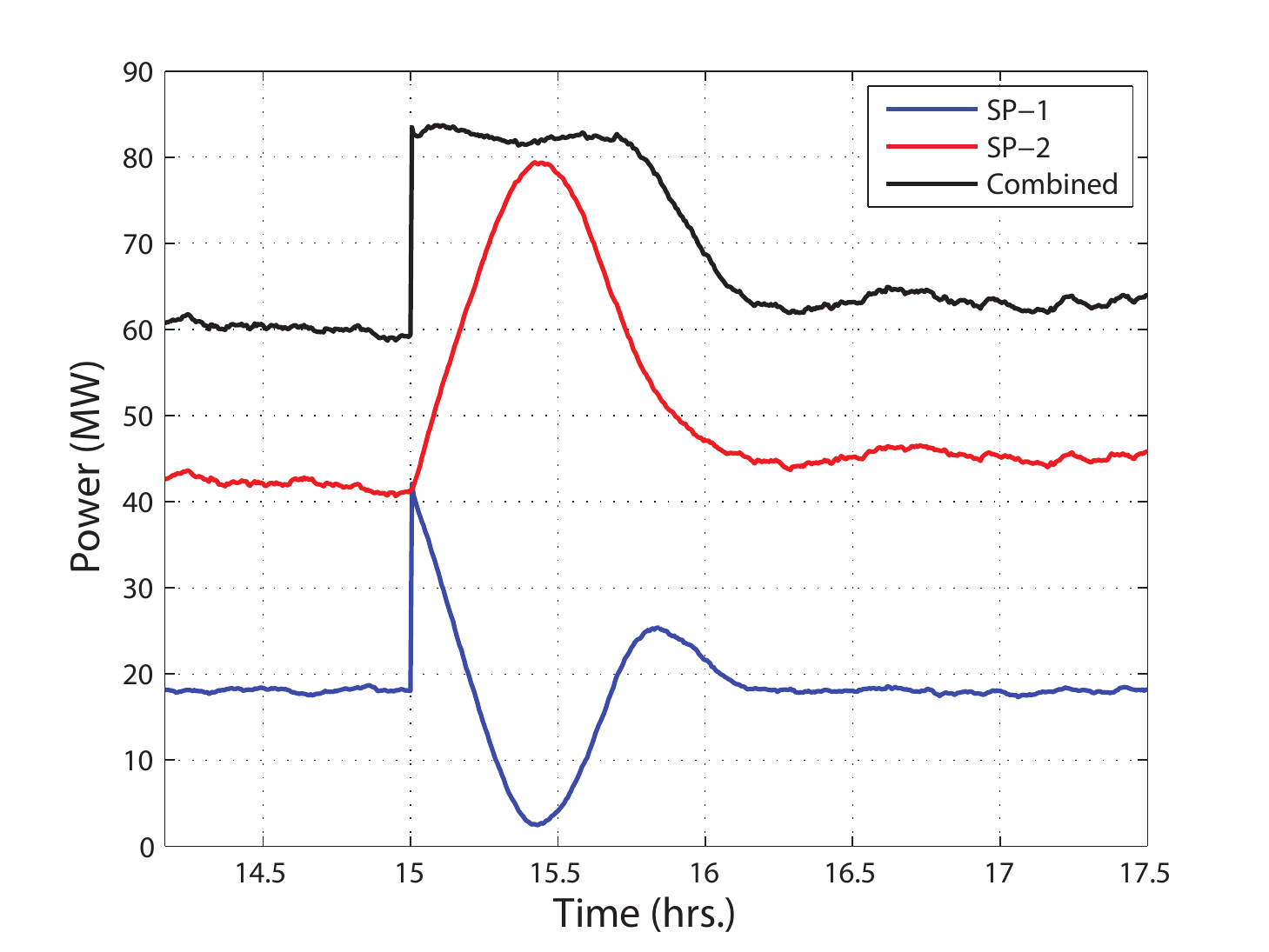}\label{fig:safe4_on}}\hspace{0.01in}
  \subfigure[Negative pulse: $p=0.36$, $\delta=0.9~^oC$]{\includegraphics[width=2.65in]{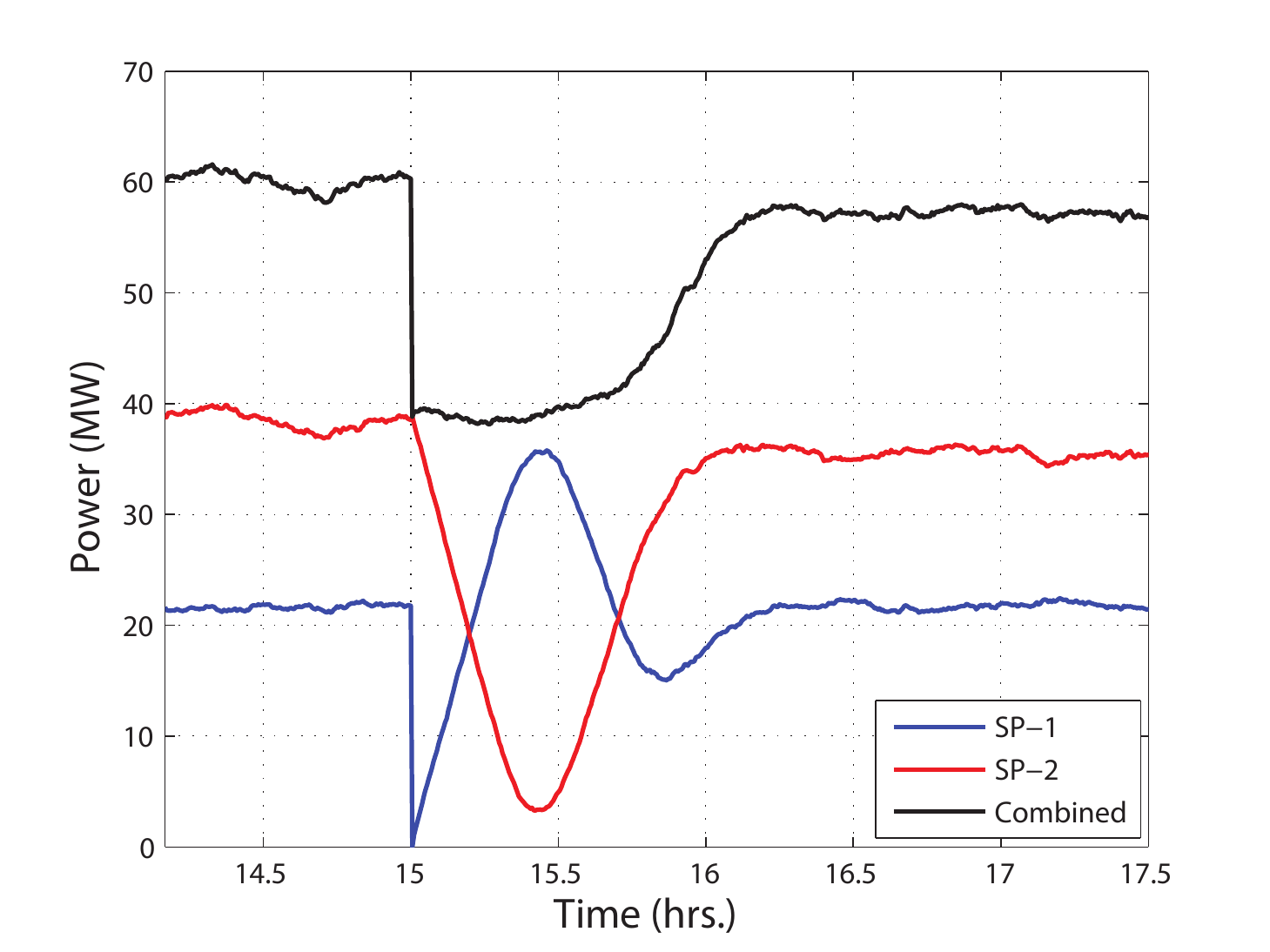}\label{fig:safe4_off}}
  \caption{Power response under hybrid protocol that combines SP-1 and SP-2.}\label{fig:safe_sw_nonexact}
\end{figure*}
Fig.~\ref{fig:safe4_off} shows that by combining two SPs, we can instantly respond to emergency by decreasing power consumed by TCLs and sustain the lower power demand by the population almost at constant value during half an hour. During the following half an hour the power demand by the population is slowly growing and saturating close to the initial value. This is a type of the power profile that is frequently needed from spinning reserves \cite{kirby08,kirby07}.

\section{Conclusion}\label{sec:concl}

We have described and simulated the operation of a class of open-loop safe protocols that  resolve the TCL synchronization problem nearly eliminating oscillations of controlled TCL power.  These safe protocols have several benefits:
\begin{itemize}
\item absence of synchronization of TCLs and related parasitic oscillations
\item fast control response (limited mostly by communication latency) enabling the generation of sharp pulses
\item no requirement for two-way communication, and consequently preservation of privacy
\item very limited information about the TCL ensemble is required to estimate the response. 
\item TCL ensembles can be for both up and down regulation, i.e. TCL power can be both increased and decreased
\end{itemize}

However, safe protocols also appear to have certain limitations whose impact require further investigation:
\begin{itemize}
\item more frequent than normal switchings of TCLs can shorten their lifetime. For example, when SP-3 is used with a small duration $\delta t$, a TCL will undergo at least two switchings while normally it takes a cycle duration (e.g. $\sim$60~min in the studied population) to complete two switchings. This effect likely limits SP-3 to specific applications to responding to unusually large but relatively rare power imbalances such as on the shoulder of wind ramps
\item the response of air conditioners to the control signal is usually restricted by a delay of about 20 seconds, which restricts duration and precision of SP-3 application to this class of TCLs
\item customer tolerance can limit allowable changes in temperature setpoint restricting the application of SP-2
\item the finite size of a TCL population restricts the total energy that can be compensated through pulse generation. After obtaining a useful power pulse, it takes about the natural TCL cycle time for the switched safe protocols to "recover", i.e. to bring switched TCLs back to the customer-defined dead band range.  Therefore, a given TCL can be used only once per this characteristic time
\item an  estimate of at least some aggregated TCL ensemble parameters are still desired for accurate control of generated power response. This information is likely obtainable from monitoring at the utility level, e.g. by observing response from  weak short pulses generated by SP-3. An alternative possibility is to use a {\it limited} number of  houses with  smart meters for two-way communication to provide information that is sufficient to estimate basic aggregated parameters of the whole population   

\item the safe protocols require a small amount of trivial to realize embedded intelligence in each TCL controller. 
\end{itemize}

If these challenges can be overcome, the potential impact of TCL demand response, and specifically the safe protocols described here, would be large. Current estimates show that there are over 130 millions  houses in the USA alone with over 100 millions of them equipped with air conditioners \cite{acnum}.  Considering a conservative estimate of 100MW reliable power control from air conditioners  and a similar amount from water heaters in a one million houses,   this technology  implemented on the national scale could provide up to 20 GW power of ancillary services.

\section*{Acknowledgment}
The authors would like to thank Professor Ian Hiskens of University of Michigan, Ann Arbor for his valuable guidance throughout this work. The work at LANL was carried out under the auspices of the National Nuclear
Security Administration of the U.S. Department of Energy at Los
Alamos National Laboratory under Contract No. DE-AC52-06NA25396. 


\appendix

\section{}
\begin{theorem}
\emph{(No-Go Theorem)}
One cannot generate  extra energy by switching conditioners between ON and OFF branches along the SP-1 in a large heterogeneous ensemble of TCLs.
\end{theorem}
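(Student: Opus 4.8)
The plan is to prove energy neutrality device-by-device, which automatically covers the heterogeneous ensemble because the argument never couples distinct TCLs. First I would extract a pointwise \emph{energy identity} from the governing equation (\ref{eq:micro}). In the ON state one has $C\dot\theta + \tfrac{1}{R}(\theta-\theta_{amb}) = -P$, while in the OFF state the same combination equals $0$; hence in \emph{either} state the instantaneous electrical draw obeys $P\,\mathbf{1}_{\mathrm{ON}} = -C\dot\theta - \tfrac1R(\theta-\theta_{amb})$. Integrating over an interval $[t_0,t_1]$ gives, for a single device,
\[
E = \int_{t_0}^{t_1} P\,\mathbf{1}_{\mathrm{ON}}\,dt = -C\,[\theta(t_1)-\theta(t_0)] - \frac{1}{R}\int_{t_0}^{t_1}\!\big(\theta(t)-\theta_{amb}\big)\,dt ,
\]
so the energy a TCL consumes over any window depends only on its net temperature change and the time-integrated excess of its temperature over ambient.

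Next I would track a single originally-ON device through SP-1 and compare it against its undisturbed counterpart launched from the identical initial state $(\mathrm{ON},\theta_{\mathrm{saved}})$. Under SP-1 the device jumps to OFF at $\theta_{\mathrm{saved}}$, heats to $\theta_+$, cools on the ON branch to $\theta_-$, heats again to $\theta_{\mathrm{saved}}$, and then autonomously switches ON, returning to its exact initial state after one natural period $T$. The pivotal observation is that from that instant onward the SP-1 and undisturbed trajectories \emph{coincide identically}, so every energy difference is confined to the single period $[t_0,t_0+T]$. Within that period both trajectories traverse the same hysteresis loop: they return to the same temperature, so the boundary term $-C\,\Delta\theta$ vanishes for each; they spend identical total time on the ON branch, so the draw $P\cdot t_{\mathrm{ON}}$ matches; and the integral $\int(\theta-\theta_{amb})\,dt$ decomposes into the same ON-segment and OFF-segment contributions in both cases. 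Hence the two energies are equal. Originally-OFF devices are never touched by SP-1, so their trajectories and energies are trivially identical. Summing these per-device equalities over the population gives zero net energy difference once every device has completed its cycle, which is the assertion of the theorem, and no step uses homogeneity of $C,R,P$.

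The step I expect to be the main obstacle is establishing equality of the temperature integral $\int(\theta-\theta_{amb})\,dt$ over the SP-1 cycle versus the undisturbed cycle. Although the two trajectories visit the ON and OFF segments in a permuted order and are stitched together by two \emph{instantaneous} branch switches at $\theta_{\mathrm{saved}}$, I must argue carefully that they sweep exactly the same closed loop and assign exactly the same dwell time to each temperature, so that the segment-wise integrals sum to the same value independent of phase. Making this ``same loop, same dwell times, zero-time jumps'' statement precise, together with the remark that the period and ON-duration are phase-independent invariants of the loop, is what converts the conservation-of-energy heuristic quoted before the theorem into a rigorous argument.
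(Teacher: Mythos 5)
Your proof is correct, and it takes a genuinely different route from the paper's. The paper argues at the aggregate level: it first partitions the heterogeneous ensemble into large, nearly homogeneous sub-populations, then averages the linear dynamics (\ref{eq:micro}) to get a closed relation $\alpha \tfrac{d}{dt}\langle\theta\rangle = -\langle P\rangle + \beta(\langle\theta\rangle - \gamma)$, uses the fact that SP-1 reproduces the initial distribution to kill the $\tfrac{d}{dt}\langle\theta\rangle$ term, and finally invokes an ergodicity-type identification $\int_0^{t_p}\theta\,dt = \langle\theta_{\rm st}\rangle\,t_p$ (time average over one cycle equals the steady-state ensemble average over random phases) to equate $W_{\rm tot}$ with the steady-state consumption $W_{\rm st}(t_p)$. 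You instead work device-by-device: each originally-ON TCL is compared with its own undisturbed copy started from the same state, and the two trajectories are shown to coincide after exactly one natural period $T$, with equal ON-dwell time (hence equal energy, since the draw is constant when ON) inside that period; the key technical ingredient is time-invariance of the autonomous ON/OFF dynamics, which lets the split OFF-segments $\theta_{\rm saved}\!\to\!\theta_+$ and $\theta_-\!\to\!\theta_{\rm saved}$ recombine into the full OFF transit. Your route buys several things: it is exact rather than approximate (no appeal to "practically identical" sub-populations or to largeness of the ensemble at all), it avoids the ergodicity step, which the paper asserts rather than proves, and it yields a strictly stronger conclusion --- energy neutrality holds for every single device once that device completes its cycle, so heterogeneity is handled trivially. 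What the paper's approach buys in exchange is consistency with the mean-field, distribution-level language used throughout (kinks, holes, steady-state densities), an explicit formula for steady-state power in terms of ensemble constants, and a template that could in principle survive settings where individual trajectory coincidence fails but distributional recovery still holds (e.g., with thermal noise, which both proofs otherwise ignore). One cosmetic caveat: your energy identity is written for the cooling power $P$, whereas the electrical draw is only proportional to it via the efficiency coefficient; since you prove an equality (zero excess), the constant factor is immaterial, but it is worth stating.
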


\begin{proof}
Consider application of the SP-1 to a large population of TCLs, that are initially in the uncorrelated steady state. We assume that due to large size of the population, it can be partitioned by a number of sub-populations, which still have large sizes, with practically identical parameters of TCLs and ambient conditions. It is then sufficient to prove the No-Go Theorem for an arbitrary homogeneous sub-population, because if any subpopulation does not produce an excess of energy then neither does their ensemble.

The evolution of temperature of a TCL, in (\ref{eq:micro}), is linear both in temperature and consumed power. Hence, after averaging (\ref{eq:micro}) over the population with random initial phases, we find that 
the averaged consumed from the Power Grid power $\langle P \rangle$, and the averaged temperature $ \langle \theta \rangle$ of the ensemble also satisfy a linear relation:
\begin{equation}
\alpha \frac{d}{dt} \langle \theta \rangle=-\langle P(t)\rangle+\beta(\langle \theta \rangle - \gamma),
\label{ev1}
\end{equation}
where $\alpha$, $\beta$, and $\gamma$ are constants that characterize the ensemble. They can be expressed via the number of TLCs $N$, the efficiency of the TCLs, and parameters $R$, $C$ and $\theta_{amb}$. At the steady state, we would have $ \frac{d}{dt} \langle \theta \rangle=0$, so that the average power consumed by the ensemble in the steady state is $\langle P_{\rm st} \rangle=\beta(\langle \theta_{st} \rangle - \gamma)$,  where $\langle \theta_{\rm st} \rangle$ is the average temperature in the unperturbed ensemble of TCLs. The total energy consumed by the ensemble in  the steady state during some time $\tau$ then would be 
\begin{equation}
W_{st}(\tau)=\beta(\langle \theta \rangle - \gamma) \tau.
\label{Wst}
\end{equation}

Consider now the total energy, $W_{\rm tot}$, generated by the TCL ensemble during the time $t_p$ between the initial and the final stages of the SP-1:
\begin{equation}
W_{\rm tot} = \int_0^{t_p} dt \langle P(t) \rangle =
\beta\int_0^{t_p} dt (\langle \theta \rangle - \gamma),
\label{w2}
\end{equation}
where we used (\ref{ev1}) and the fact from Section III that, for SP-1, at time $t_p$,  the final distribution of TCLs over temperature and ON/OFF states coincides with the initial distribution. This means that the average temperature of the ensemble coincides with the initial one, so that integrating the lhs of (\ref{ev1}) we obtain
$\int_0^{t_p} \frac{d}{dt} \langle \theta \rangle dt =0$.

Here we note that the time $t_p$ coincides with the period of the cycle of a single TCL. Indeed, if we trace the evolution of any TCL of the ensemble during the SP-1 stages, we find that it goes purely autonomously during the period $t_p$ through all stages of the considered temperature operation band, just like TCLs in nonperturbed ensemble except switching the branches at the beginning and the end of the protocol. 
We also note that at the steady state, the averaged over time power consumed by one TCL during the  cycle of its operation coincides with the power averaged over the large population of TCLs with the random phase distribution. This means that, for a single TCL in a homogeneous population, $\int_0^{t_p} \theta dt = \langle \theta_{\rm st} \rangle t_p$.
 Hence, (\ref{w2}) can be rewritten as
 \begin{equation}
W_{\rm tot} = \beta( \langle \theta_{\rm st} \rangle - \gamma) t_p ,
\label{w22}
\end{equation}
But (\ref{w22}) is exactly the energy $W_{\rm st}$ in (\ref{Wst}) consumed by the TCL ensemble during time $t_p$ without application of the external control. Hence the excess energy is $W_{\rm tot}-W_{\rm st}=0$.
\end{proof}



\bibliographystyle{model1-num-names}
 


\end{document}